\documentclass[11pt]{article}
\usepackage{amsmath,amssymb,amsthm}
\usepackage[margin=1in]{geometry}
\usepackage{hyperref}
\hypersetup{colorlinks=true,linkcolor=blue,citecolor=blue,urlcolor=blue}

\theoremstyle{plain}
\newtheorem{theorem}{Theorem}[section]
\newtheorem{lemma}[theorem]{Lemma}
\newtheorem{corollary}[theorem]{Corollary}
\newtheorem{fact}[theorem]{Fact}
\theoremstyle{definition}
\newtheorem{definition}[theorem]{Definition}
\theoremstyle{remark}
\newtheorem{remark}[theorem]{Remark}
\newtheorem{example}[theorem]{Example}

\newcommand{\Cay}{\mathrm{Cay}}
\newcommand{\Irr}{\mathrm{Irr}}

\title{$D^L$, $D^Q$ Spectrum and Eigenvalue-free Interval of Cayley Graphs over Finite Nilpotent Groups}

\author{
	Pallabi Manna$^{a}$ \and Santanu Mandal$^{b}$
}

\date{
	\small
	$^a$Department of Mathematics, Indian Institute of Technology Hyderabad,\\ Kandi, Sangareddy-502284, Telangana, India\\
	\texttt{mannapallabimath001@gmail.com}\\[2mm]
	$^{b}$Department of Computer Science and Engineering, SRM University-AP,\\ Amaravati-522240, Andhra Pradesh, India\\
	\texttt{santanu.vumath@gmail.com}    
	\\[4mm]
	\today
}

\begin{document}
\maketitle

\begin{abstract}
For a finite group $G$ and a symmetric connection set $S$, we study the distance Laplacian ($D^L$) and
distance signless Laplacian ($D^Q$) spectrum of the Cayley graph $\Cay(G,S)$ for $G$ a finite nilpotent
group. Using the internal direct product decomposition over Sylow subgroups, we reduce every
question to a single (possibly non-abelian) prime-power factor. In this setting, the required tools are furnished by ordinary representation theory rather than character theory. Based on this framework, we then derive a complete and explicit formula for every eigenvalue, with
multiplicity, of the distance matrix, distance Laplacian, and distance signless Laplacian of
$\Cay(G,S)$ for $G$ nilpotent: each eigenvalue is indexed by a tuple of irreducible characters of
the Sylow factors. Finally, we use the explicit spectrum, together with
Perron--Frobenius theory, to determine the eigenvalue-free interval of $D^L$ and $D^Q$. 

\medskip
\noindent\textbf{AMS Subject Classification:} 05C25, 05C50, 20C15

\smallskip
\noindent\textbf{Keywords:} Cayley graph, nilpotent group, distance Laplacian matrix, distance
signless Laplacian matrix, eigenvalue-free interval.
\end{abstract}


\section{Introduction}

The concept of graphs defined on groups is developed through several parallel constructions in
which the elements of a group are treated as vertices and an algebraic relation between elements
determines adjacency. The starting point is Arthur Cayley, who introduced what are now called
Cayley graphs in connection with the representation of groups by permutations; their origin can
be traced to Cayley's 1878 work~\cite{Cayley1878}, where he represented groups by means of their
generators and the corresponding multiplication structure. The idea of defining a graph from a
group has subsequently been generalized in many directions, with different algebraic relations
producing different graphs, such as the power graph, the commuting graph, the generating graph,
the intersection graph, and the subgroup graph.

Cayley graphs provide one of the richest sources of vertex-transitive graphs, and the interaction
between the algebraic structure of the underlying group $G$ and the combinatorial/spectral
structure of $\Cay(G,S)$ has been examined from various perspectives.

On the spectral side, Babai~\cite{Babai1979} derives an expression for the spectrum of a Cayley
graph in terms of the irreducible characters of $G$, the foundational tool underlying everything
that follows. Building on this, a substantial literature has investigated spectral properties of
Cayley graphs for specific group families~\cite{ChattopadhyayPanigrahi2015,
PalanivelChithra2019,PalanivelChithra2021}, and the comprehensive survey~\cite{LiuZhou2022}
collects much of this landscape. More recently, attention has turned to the \emph{distance}
matrix of a graph and its two natural Laplacian-type companions, the distance Laplacian
$D^L=Tr-D$ and the distance signless Laplacian $D^Q=Tr+D$, introduced by Aouchiche and
Hansen~\cite{AouchicheHansen2013,AouchicheHansen2016,AouchicheHansen2016b,AouchicheHansen2014}, whose spectra encode
global metric information not visible to the ordinary Laplacian. The reader is referred to the
recent survey articles by Rather and Aouchiche~\cite{RatherAouchiche2025} and by Rather, Ganie
and Wang~\cite{RatherGanieWang2026} for a thorough overview of $D^L$ and $D^Q$; closely related
recent work examines which finite groups admit only distance-integral Cayley
graphs~\cite{ZhangYinLiuFeng2026}. However, the spectral study of the Cayley graph over nilpotent
groups — including a complete description of the distance spectrum itself and the associated
eigenvalue-free intervals — has remained largely unresolved; we attempt this here.

This paper develops two theories in parallel for Cayley graphs over finite nilpotent groups: the
complete spectrum, and the associated eigenvalue-free interval of the $D$, $D^L$, $D^Q$
matrices. The key structural fact we exploit throughout is that a finite group $G$ is nilpotent
if and only if it is the internal direct product $G=P_1\times\cdots\times P_r$ of its Sylow
subgroups (each automatically normal); when the connection set $S$ splits compatibly along this
decomposition, $\Cay(G,S)$ is literally the graph Cartesian product of the Cayley graphs
$\Cay(P_i,T_i)$ on the individual (possibly non-abelian) prime-power factors. This allows us to
reduce every spectral question to a single Sylow factor, where the relevant machinery is
representation theory~\cite{Isaacs1994,Robinson1996,Serre1977} (specializing to ordinary
character theory when the factor is abelian).

\medskip
\noindent\textbf{Organization.} Section~\ref{sec:prelim} collects the group-theoretic,
graph-theoretic, and character-theoretic preliminaries used throughout, including the
representation-theoretic background and the Cartesian product structure of $\Cay(G,S)$ over a
Sylow decomposition. Section~\ref{sec:spec} gives the complete, explicit distance spectrum of
$\Cay(G,S)$ for $G$ nilpotent (Theorem~\ref{thm:full-spectrum}), indexed by tuples of irreducible
characters of the Sylow factors. Section~\ref{sec:gap} uses this explicit spectrum, together with
Perron--Frobenius theory, to determine the exact eigenvalue-free interval adjacent to the
extremal eigenvalue of $D^L(\Gamma)$ and $D^Q(\Gamma)$ (Theorems~\ref{thm:exact-gap}
and~\ref{thm:exact-gap-Q}).

\section{Preliminaries}\label{sec:prelim}

\subsection{Graph invariants}
\label{def:DLDQ}
Consider the graph $\Delta=(V,E)$. Suppose $\mathbb{D}$ and $A$ denote the diagonal degree matrix and the adjacency matrix of $\Delta$ respectively. The distance matrix $D(\Delta)$ of the connected graph $\Delta$ is the matrix indexed by the vertices of
$\Delta$ where $D_{i,j} = d(v_i, v_j)$ and $d(v_i, v_j)$ denotes the distance between the vertices $v_i$ and $v_j$. The sum of the distances between a vertex $v$ and every other vertex in $\Delta$ is known as the transmission of $v$ and denoted by $Tr(v)$, that is,

\[
Tr(v)=\sum_{\substack{v\in V}} d(u,v).
\]
For a graph $\Delta$, the diagonal transmission matrix denoted by $Tr(\Delta)$ is the diagonal matrix:\\ $diag\{Tr(v_1), Tr(v_2), \cdots, Tr(v_n)\}$. Then the Laplacian ($L$), signless Laplacian ($Q$), distance Laplacian ($D^L$) and distance signless Laplacian ($D^Q$) \cite{AouchicheHansen2013,
	AouchicheHansen2016b} matrices of $\Delta$ are defined  by
\begin{center}
	$L=\mathbb{D}-A$, $Q=\mathbb{D}+A$, $D^L(\Delta)=Tr(\Delta)-D(\Delta)$, $D^Q(\Delta)=Tr(\Delta)+D(\Delta)$ 
\end{center}
If there is no uncertainty, we just write $Tr$, $D$, $D^L$, and $D^Q$ to represent the appropriate matrix.

\medskip
We use the standard symbols and syntax of spectral graph theory throughout, consistent with the textbooks~\cite{BrouwerHaemers2012,
	Chung1997,GodsilRoyle2001}. To clarify, throughout the paper we use the notation $G$ for a group and $\Cay(G,S)$ for the
Cayley graph. Occasionally we use the symbol $\Gamma$ (or $\Delta$) to refer to a graph in a general sense, and the context makes clear what is meant.
\subsection{Groups and Cayley graphs}

Let $G$ be a finite group and let $V$ be a finite-dimensional vector space over $\mathbb C$. A
representation of $G$ on $V$ is a group homomorphism
\[
\rho:G\longrightarrow GL(V),
\]
where $GL(V)$ denotes the group of all invertible linear transformations of $V$. The
representation is denoted simply by $\rho$ whenever the underlying vector space is clear from
context. The degree of $\rho$, denoted $\deg(\rho)$, is $\deg(\rho)=\dim V$. Two
representations $(\rho,V)$ and $(\phi,W)$ of $G$ are said to be equivalent, denoted
$\rho\sim\phi$, if there is a vector-space isomorphism $T:V\to W$ satisfying $T\rho(g)=\phi(g)T$
for every $g\in G$. More generally, a morphism from $(\rho,V)$ to $(\phi,W)$ is a linear
transformation $T:V\to W$ for which $T\rho(g)=\phi(g)T$ for all $g\in G$; the collection of all
such morphisms is denoted $\operatorname{Hom}_G(\rho,\phi)$, a subspace of
$\operatorname{Hom}(V,W)$. If $T\in\operatorname{Hom}_G(\rho,\phi)$ is invertible, $T$ is an
isomorphism of representations and $\rho\sim\phi$. The \emph{character} afforded by $\rho$ is
$\chi_\rho:G\to\mathbb C$, $\chi_\rho(g)=\operatorname{Tr}(\rho(g))$; its degree is
$\deg(\chi_\rho)=\deg(\rho)=\dim V=\chi_\rho(1)$. A subspace $W\le V$ is \emph{$G$-invariant} if
$\rho(g)w\in W$ for every $g\in G,\,w\in W$; in that case $\rho|_W:G\to GL(W)$ is again a
representation. The subspaces $\{0\}$ and $V$ are always $G$-invariant (the trivial invariant
subspaces); $(\rho,V)$ is irreducible if $V$ has no $G$-invariant subspaces other than
these, and $\chi_\rho$ is then an irreducible character. We write $\Irr(G)$ for the
complete set of pairwise distinct complex irreducible characters of $G$.

A subset $S\subseteq G\setminus\{1\}$ is symmetric if $S^{-1}=S$ ($-S=S$ when $G$ is
written additively). The Cayley graph $\Cay(G,S)$ has vertex set $G$, with $x\sim y$ iff
$x^{-1}y\in S$; it is connected iff $\langle S\rangle=G$, which we assume throughout. We write
$d(x,y)$ for graph distance, $\delta:=\operatorname{diam}(\Cay(G,S))$, and
$S_j:=\{g\in G:d(1,g)=j\}$ (or $S^{(j)}$) for the $j$-th distance layer,
$j=1,\dots,\delta$, so $S_1=S$ and $\bigsqcup_jS_j=G\setminus\{1\}$.

\begin{fact}[Structure of finite nilpotent groups~\cite{Robinson1996}]\label{fact:nilpotent}
A finite group $G$ is nilpotent if and only if $G$ is the internal direct product of its Sylow
subgroups: $G=P_1\times\cdots\times P_r$, where $p_1,\dots,p_r$ are the distinct primes dividing
$|G|$ and $P_i$ is the (unique, hence normal) Sylow $p_i$-subgroup of $G$.
\end{fact}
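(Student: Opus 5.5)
We prove both directions of Fact~\ref{fact:nilpotent}. Throughout we use two standard facts about finite groups. First, a finite $p$-group has nontrivial centre (class equation), hence is nilpotent. Second, in a nilpotent group every proper subgroup $H<G$ satisfies $H<N_G(H)$ (``normalizers grow'').

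For the direction ($\Leftarrow$), suppose $G=P_1\times\cdots\times P_r$. Each $P_i$ is a $p_i$-group and therefore nilpotent. A direct product of finitely many nilpotent groups is nilpotent. To see this, take upper central series and note that $Z(A\times B)=Z(A)\times Z(B)$. Inducting then gives $Z_k(A\times B)=Z_k(A)\times Z_k(B)$, so the series of $A\times B$ reaches the whole group once both factor series have.

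For the direction ($\Rightarrow$), let $G$ be nilpotent and let $P$ be a Sylow $p$-subgroup, with $N=N_G(P)$. The Frattini-type argument shows $N_G(N)=N$. Indeed, if $g$ normalizes $N$, then $P^g$ is a Sylow $p$-subgroup of $N$. Since $P\trianglelefteq N$, $P$ is the unique Sylow $p$-subgroup of $N$, so $P^g=P$ and $g\in N$. By normalizers growing, $N$ cannot be proper, so $N=G$ and $P\trianglelefteq G$. In particular $P$ is the unique Sylow $p$-subgroup.

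Write $P_1,\dots,P_r$ for the Sylow subgroups, all normal. Their orders are pairwise coprime. Hence $P_i\cap\prod_{j\ne i}P_j=1$, because the product of normal subgroups $\prod_{j\ne i}P_j$ has order dividing $\prod_{j\ne i}|P_j|$, which is coprime to $|P_i|$. Elements of distinct normal subgroups with trivial intersection commute, since the commutator $[x,y]$ lies in $P_i\cap P_j=1$. Finally, $|P_1\cdots P_r|=\prod|P_i|=|G|$, so $G$ is the internal direct product of the $P_i$.

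The main obstacle is the lemma that normalizers grow, which we prove separately. Let $H<G$ be proper and take the upper central series $1=Z_0\le Z_1\le\cdots\le Z_c=G$. Choose the largest $k$ with $Z_k\le H$; then $k<c$ and $Z_{k+1}\not\le H$. Since $[Z_{k+1},G]\le Z_k\le H$, every $z\in Z_{k+1}$ satisfies $h^z=h[h,z]\in H$ for all $h\in H$. Thus $Z_{k+1}\le N_G(H)$, and $N_G(H)\ne H$.
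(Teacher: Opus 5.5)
Your proof is correct and complete. The paper gives no argument for this Fact; it simply cites Robinson's text. So the comparison is between a bare citation and your self-contained proof, which follows the standard textbook route. For ($\Leftarrow$) you use that finite $p$-groups are nilpotent and that nilpotency passes to finite direct products via $Z_k(A\times B)=Z_k(A)\times Z_k(B)$. For ($\Rightarrow$) you combine the normalizer condition with the fact that Sylow normalizers are self-normalizing, $N_G(N_G(P))=N_G(P)$. You also prove each auxiliary step rather than assuming it: the normalizer condition via the upper central series, the coprime-order trivial-intersection argument, and the commuting of normal subgroups that meet trivially.

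The points left implicit are all routine:
\begin{itemize}
\item $P^g\le N$ is Sylow in $N$ because $|P^g|=|P|$ is the full $p$-part of $|G|$.
\item $H^z\subseteq H$ forces $H^z=H$ by finiteness.
\item ``$p$-groups are nilpotent'' needs the nontriviality of the centre of every nontrivial quotient $P/Z_k(P)$, which holds because each is again a $p$-group.
\end{itemize}

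What your version buys over the citation is self-containment. Note, though, that the paper only uses the ($\Rightarrow$) direction, to fix the decomposition $G=P_1\times\cdots\times P_r$ on which everything downstream rests. The ($\Leftarrow$) half, while correct, plays no role in the paper's results.
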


Fix such a decomposition, $n_i:=|P_i|$, $n:=|G|=\prod_in_i$, and identify $G$ with
$P_1\times\cdots\times P_r$; let $\iota_i:P_i\hookrightarrow G$ be the coordinate embedding.

\begin{definition}\label{def:splits}
A symmetric $S\subseteq G\setminus\{1\}$ splits along the Sylow decomposition if
$S=\bigsqcup_{i=1}^r\iota_i(T_i)$ for symmetric $T_i\subseteq P_i\setminus\{1\}$ with
$\langle T_i\rangle=P_i$.
\end{definition}

Since the $P_i$ commute elementwise and each $T_i$ generates $P_i$, $\langle S\rangle=G$; write
$\Gamma:=\Cay(G,S)$ and $\Gamma_i:=\Cay(P_i,T_i)$, $k_i:=|T_i|$, $\delta_i:=\operatorname{diam}(\Gamma_i)$.
When $G$ (equivalently every $P_i$) is abelian, we use additive notation ($0,+,-$) for that
group, as is conventional.

\begin{lemma}[Degree regularity]\label{lem:regular}
Any connected Cayley graph $\Cay(P,T)$ ($|T|=k$) is $k$-regular; consequently the number of edges
of the Cayley graph is given by $|E(\Cay(P,T))|=|P|k/2$.
\end{lemma}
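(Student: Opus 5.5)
The plan is to compute the neighbourhood of an arbitrary vertex directly from the definition of $\Cay(P,T)$. After that, the edge count follows from the handshake lemma.

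First, fix $x\in P$. By definition, $y\sim x$ if and only if $x^{-1}y\in T$, that is, $y=xt$ for some $t\in T$. So the neighbourhood of $x$ is the set $N(x)=\{xt : t\in T\}=xT$. The map $t\mapsto xt$ is injective, since left multiplication by $x$ is a bijection of $P$. Hence $|N(x)|=|T|=k$.

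Second, we check that this neighbourhood count really is the degree in a simple undirected graph. Symmetry of $T$ makes adjacency symmetric: if $x^{-1}y=t\in T$, then $y^{-1}x=t^{-1}\in T$. Moreover $1\notin T$, so $x\notin N(x)$ and there are no loops. Each $t\in T$ therefore contributes exactly one distinct edge at $x$, and $\deg(x)=k$ for every $x$. This shows that $\Cay(P,T)$ is $k$-regular. Connectivity, i.e.\ $\langle T\rangle=P$, is not needed for this part.

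Finally, we apply the handshake lemma: $\sum_{x\in P}\deg(x)=2|E|$. This gives $|P|k=2|E|$, so $|E(\Cay(P,T))|=|P|k/2$.

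There is no real obstacle here. The only point needing care is the second step: ensuring that distinct elements of $T$ give distinct, non-loop, undirected edges. This is exactly what injectivity of left multiplication, $1\notin T$, and $T=T^{-1}$ guarantee.
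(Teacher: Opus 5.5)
Your proposal is correct and follows the paper's proof: the neighbourhood of $x$ is the image $xT$ of the injection $t\mapsto xt$, so every vertex has degree $k$, and the handshake lemma then gives $|E|=|P|k/2$. Your extra check that $T=T^{-1}$ and $1\notin T$ make the graph simple and undirected is a point the paper leaves implicit.
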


\begin{proof}
For fixed $v\in P$, $t\mapsto vt$ is an injection $T\to P$ (left multiplication is injective)
whose image is exactly $N(v)=\{w:v^{-1}w\in T\}$, so $\deg(v)=|T|=k$. Summing over $v$ and
applying the handshake lemma gives $|E|=|P|k/2$.
\end{proof}

\begin{lemma}[Transmission regularity]\label{lem:transreg}
For a connected Cayley graph $\Cay(P,T)$, the transmission $Tr(v):=\sum_ud(v,u)$ is the
same for every $v\in P$, equal to $t:=\sum_{j=1}^{\delta}j\,|T_j|$ ($\delta=\operatorname{diam}$,
$T_j$ its distance layers).
\end{lemma}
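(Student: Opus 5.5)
The plan is to show that left multiplication by a group element is a graph automorphism of $\Cay(P,T)$, and hence preserves distances. Concretely, for $g\in P$ let $\lambda_g:P\to P$, $x\mapsto gx$. Then $(gx)^{-1}(gy)=x^{-1}g^{-1}gy=x^{-1}y$, so $x\sim y$ if and only if $\lambda_g(x)\sim\lambda_g(y)$. Since $\lambda_g$ is a bijection with inverse $\lambda_{g^{-1}}$, it is an automorphism of $\Cay(P,T)$. An automorphism maps paths to paths of the same length, and its inverse does too. Therefore $d(gx,gy)=d(x,y)$ for all $x,y\in P$.

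Next I would fix $v\in P$ and reindex the transmission sum through $\lambda_{v^{-1}}$. This gives
\[
Tr(v)=\sum_{u\in P}d(v,u)=\sum_{u\in P}d(1,v^{-1}u)=\sum_{w\in P}d(1,w)=Tr(1),
\]
because $u\mapsto v^{-1}u$ is a bijection of $P$. So the transmission does not depend on $v$.

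Finally, I would evaluate $Tr(1)$ by grouping the vertices according to their distance from $1$. Connectivity of $\Cay(P,T)$, equivalently $\langle T\rangle=P$, makes every distance finite and at most $\delta$. Hence $P$ is the disjoint union of $\{1\}$ and the layers $T_1,\dots,T_\delta$, with $d(1,1)=0$. This yields $Tr(1)=\sum_{j=1}^{\delta}j\,|T_j|=t$.

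There is no real obstacle here. The only point needing care is to check the automorphism claim with the paper's adjacency convention $x^{-1}y\in T$, which is why \emph{left} rather than right translations are used. The rest is a change of summation variable together with the decomposition of $P$ into distance layers.
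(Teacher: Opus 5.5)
Your proposal is correct and follows the paper's proof essentially verbatim: left translations $\lambda_g$ are automorphisms (hence isometries) under the convention $x^{-1}y\in T$, the substitution $u=vu'$ gives $Tr(v)=Tr(1)$, and $Tr(1)=\sum_g d(1,g)=t$. Your remark that connectivity partitions $P\setminus\{1\}$ into the layers $T_1,\dots,T_\delta$ spells out the last equality, which the paper leaves implicit.
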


\begin{proof}
Left multiplication $\lambda_g:x\mapsto gx$ is a graph automorphism of any Cayley graph (as
$\lambda_g(x)^{-1}\lambda_g(y)=x^{-1}y$), hence an isometry: $d(gv,gu)=d(v,u)$. So
$Tr(v)=\sum_ud(v,u)=\sum_{u'}d(1,u')=Tr(1)$ (substituting $u=vu'$), giving $Tr\equiv
Tr(1)=\sum_gd(1,g)=t$.
\end{proof}

\begin{fact}[Sum of transmissions]\label{fact:sumtrans}
For any connected graph $\Delta$, $\sum_{v}Tr(v)=2W(\Delta)$, where $W$ is the Wiener index
in connection to the topological indices \cite{Wiener1947}.
\end{fact}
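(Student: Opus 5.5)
We obtain the identity $\sum_v Tr(v)=2W(\Delta)$ by double counting the ordered pairs of vertices. By definition $Tr(v)=\sum_{u\in V}d(v,u)$, where the term $u=v$ contributes $d(v,v)=0$. Summing over all $v$ therefore gives
\[
\sum_{v\in V}Tr(v)=\sum_{v\in V}\sum_{u\in V}d(v,u)=\sum_{(u,v)\in V\times V}d(u,v),
\]
a sum over all ordered pairs of vertices. The Wiener index is $W(\Delta)=\sum_{\{u,v\}\subseteq V}d(u,v)$, a sum over unordered pairs of distinct vertices~\cite{Wiener1947}.

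Next we split the ordered-pair sum according to whether the two coordinates coincide. The diagonal pairs $(v,v)$ contribute $0$. Each unordered pair $\{u,v\}$ with $u\neq v$ corresponds to exactly two ordered pairs, $(u,v)$ and $(v,u)$. Since graph distance is symmetric, $d(u,v)=d(v,u)$, so both ordered pairs contribute the same amount. Hence the ordered sum equals $2\sum_{\{u,v\}}d(u,v)=2W(\Delta)$.

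Connectedness of $\Delta$ is used only to guarantee that every distance is finite, so that all of these sums are well defined. There is no real obstacle; the one point requiring care is that the definition of $W$ sums over unordered pairs, which is exactly where the factor $2$ comes from.

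As a corollary in the Cayley setting, Lemma~\ref{lem:transreg} gives $Tr(v)=t$ for every vertex, so $W(\Cay(P,T))=|P|\,t/2$.
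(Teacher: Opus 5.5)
Your proof is correct and is essentially the paper's argument: both rewrite $\sum_v Tr(v)$ as a sum over ordered pairs, note that the diagonal terms vanish, and use the symmetry $d(u,v)=d(v,u)$ to obtain twice the sum over unordered pairs. Your closing consequence $W(\Cay(P,T))=|P|\,t/2$ follows correctly from Lemma~\ref{lem:transreg}.
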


\begin{proof}
$\sum_vTr(v)=\sum_v\sum_ud(v,u)$, the sum over all ordered pairs; since $d(v,u)=d(u,v)$ and
$d(v,v)=0$, this equals $2\sum_{\{u,v\}}d(u,v)=2W(\Delta)$.
\end{proof}

For further background on character theory, representation theory, and group theory we refer the
reader to the standard texts~\cite{Isaacs1994,Robinson1996,Serre1977}.

\subsection{The Cayley graph of \texorpdfstring{$G$}{G} as a Cartesian product}

\begin{lemma}[Cartesian product structure]\label{lem:cartesian}
$\Gamma\cong\Gamma_1\,\square\cdots\square\,\Gamma_r$: vertex set $\prod_iP_i$, with $(x_i)\sim(y_i)$
iff there is a (necessarily unique) index $i_0$ with $x_{i_0}\ne y_{i_0}$, $x_j=y_j$ for $j\ne
i_0$, and $x_{i_0}^{-1}y_{i_0}\in T_{i_0}$.
\end{lemma}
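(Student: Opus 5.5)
The approach is to prove the isomorphism directly, using the identification $G=P_1\times\cdots\times P_r$ from Fact~\ref{fact:nilpotent} as the vertex bijection. The map $\phi:\Gamma\to\Gamma_1\square\cdots\square\Gamma_r$ sends $g=\iota_1(x_1)\cdots\iota_r(x_r)$ to $(x_1,\dots,x_r)$. This is a well-defined bijection because the decomposition of $G$ as an internal direct product is unique. So the whole task is to check that adjacency is preserved in both directions.

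\textbf{Step 1 (coordinatewise quotient).} Since the $P_i$ commute elementwise and intersect pairwise trivially, the product in $G$ is coordinatewise. Hence for $x=(x_i)$ and $y=(y_i)$ we get
\[
x^{-1}y=(x_1^{-1}y_1,\dots,x_r^{-1}y_r).
\]

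\textbf{Step 2 (reading off adjacency).} By definition, $x\sim y$ in $\Gamma$ iff $x^{-1}y\in S=\bigsqcup_i\iota_i(T_i)$ (Definition~\ref{def:splits}). An element $\iota_{i_0}(t)$ with $t\in T_{i_0}$ has coordinate $t\neq 1$ in position $i_0$ and coordinate $1$ in every other position. So $x^{-1}y\in\iota_{i_0}(T_{i_0})$ holds exactly when $x_j=y_j$ for all $j\ne i_0$ and $x_{i_0}^{-1}y_{i_0}\in T_{i_0}$. The condition $x_{i_0}\neq y_{i_0}$ is automatic here, because $1\notin T_{i_0}$.

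\textbf{Step 3 (uniqueness and converse).} The index $i_0$ is unique, because an element lying in $\iota_i(P_i)\cap\iota_j(P_j)$ for $i\neq j$ must be $1$. This is also why the union in Definition~\ref{def:splits} is disjoint. For the converse, any pair satisfying the stated condition yields $x^{-1}y=\iota_{i_0}(x_{i_0}^{-1}y_{i_0})\in S$. This is precisely the definition of adjacency in the Cartesian product, so $\phi$ is a graph isomorphism.

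There is no genuine obstacle in this argument. The only point requiring care is Step 1, namely that multiplication in $G$ really is coordinatewise. This rests on the normality of the Sylow subgroups and on $[P_i,P_j]=1$ for $i\neq j$, both of which follow from Fact~\ref{fact:nilpotent}: the subgroups $P_i$ and $P_j$ are normal with trivial intersection, so their commutators lie in $P_i\cap P_j=1$. I would state this justification explicitly.
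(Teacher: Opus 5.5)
Your proposal is correct and takes essentially the same route as the paper: under the direct-product identification you compute $x^{-1}y$ coordinatewise and read off membership in $S=\bigsqcup_i\iota_i(T_i)$, which is exactly the Cartesian product adjacency rule. Your added justifications fill in details the paper leaves implicit: elementwise commutation via $[P_i,P_j]\le P_i\cap P_j=1$, and $x_{i_0}\ne y_{i_0}$ being automatic because $1\notin T_{i_0}$.
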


\begin{proof}
Under the direct-product identification, multiplication/inversion in $G$ are componentwise, so
for $x=(x_i),y=(y_i)$, $x^{-1}y=(x_i^{-1}y_i)_i$; this lies in $S=\bigsqcup_i\iota_i(T_i)$ iff it
equals $\iota_{i_0}(s)$ for a unique $i_0$ and some $s\in T_{i_0}$, i.e.\ iff $x_j=y_j$ for
$j\ne i_0$ and $x_{i_0}^{-1}y_{i_0}=s\in T_{i_0}$ — the Cartesian product adjacency rule. In
particular each edge of $\Gamma$ has a well-defined direction $i_0$.
\end{proof}

\begin{lemma}[Distance additivity]\label{lem:distadd}
$\displaystyle d_\Gamma\big((x_i),(y_i)\big)=\sum_{i=1}^rd_{\Gamma_i}(x_i,y_i)$ for all
$(x_i),(y_i)\in G$.
\end{lemma}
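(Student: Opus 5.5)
We prove the two inequalities $d_\Gamma\le\sum_i d_{\Gamma_i}$ and $d_\Gamma\ge\sum_i d_{\Gamma_i}$ separately. Both rest only on the Cartesian product description of Lemma~\ref{lem:cartesian}. Write $x=(x_i)$, $y=(y_i)$ and $m_i:=d_{\Gamma_i}(x_i,y_i)$; these are finite because each $\Gamma_i$ is connected, since $\langle T_i\rangle=P_i$.

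\emph{Upper bound.} We build an explicit walk of length $\sum_i m_i$ from $x$ to $y$. For each $i$, fix a geodesic $x_i=u_i^{(0)},u_i^{(1)},\dots,u_i^{(m_i)}=y_i$ in $\Gamma_i$. Then move one coordinate at a time. First traverse the first geodesic while holding all other coordinates at $x_j$. Next traverse the second geodesic with coordinate $1$ now frozen at $y_1$ and coordinates $j\ge 3$ at $x_j$, and so on up to coordinate $r$. By Lemma~\ref{lem:cartesian}, each step changes exactly one coordinate $i_0$ by an element of $T_{i_0}$, so each step is an edge of $\Gamma$. The walk has length $\sum_i m_i$, giving $d_\Gamma(x,y)\le\sum_i m_i$.

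\emph{Lower bound.} Take any path $x=z^{(0)},z^{(1)},\dots,z^{(\ell)}=y$ in $\Gamma$. By Lemma~\ref{lem:cartesian}, each edge $z^{(s-1)}z^{(s)}$ has a unique direction $i_0(s)$, and it changes only coordinate $i_0(s)$, via an edge of $\Gamma_{i_0(s)}$. Fix $i$ and keep only the steps with direction $i$. Their $i$-th coordinates form a walk in $\Gamma_i$ from $x_i$ to $y_i$, because the other steps leave coordinate $i$ unchanged. So the number $\ell_i$ of steps with direction $i$ satisfies $\ell_i\ge m_i$. Every step has exactly one direction, so $\ell=\sum_i\ell_i\ge\sum_i m_i$. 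Taking $\ell=d_\Gamma(x,y)$ gives the reverse inequality.

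The only point needing care is the lower bound, specifically the claim that the projection of a path onto coordinate $i$ is a genuine walk in $\Gamma_i$ once steps in other directions are deleted. This follows directly from the ``exactly one coordinate changes'' clause of Lemma~\ref{lem:cartesian}. The uniqueness of $i_0$ there is what makes the counting $\ell=\sum_i\ell_i$ exact rather than an overcount.
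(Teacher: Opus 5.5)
Your proof is correct and follows the paper's argument: the lower bound projects the direction-$i$ steps of any walk onto a walk in $\Gamma_i$ and sums over $i$, and the upper bound concatenates geodesics of the factors one coordinate at a time. You also note that each $m_i$ is finite because $\langle T_i\rangle=P_i$, which the paper leaves implicit.
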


\begin{proof}
Each edge of $\Gamma$ changes exactly one coordinate (Lemma~\ref{lem:cartesian}), by an edge of
the corresponding factor. In any walk from $(x_i)$ to $(y_i)$, the steps of direction $i$,
projected onto coordinate $i$, form a walk in $\Gamma_i$ from $x_i$ to $y_i$, so there are at
least $d_{\Gamma_i}(x_i,y_i)$ of them; summing over $i$ (steps of different directions being
disjoint), the walk has length at least $\sum_id_{\Gamma_i}(x_i,y_i)$. Concatenating, for
$i=1,\dots,r$, a shortest $\Gamma_i$-path from $x_i$ to $y_i$ (other coordinates fixed) realizes
this bound.
\end{proof}

\subsection{Character theory for a single factor}

We also record the representation-theoretic tools needed to diagonalize the distance matrix of a
single (possibly non-abelian) factor $\Cay(P,T)$. Let $P$ be a finite group. $T\subseteq P$ is
normal if it is a union of conjugacy classes. $\Irr(P)$ is the set of irreducible
characters of $P$, $\chi(T):=\sum_{t\in T}\chi(t)$, $\chi(1)=$ degree.

\begin{fact}[Regular representation]\label{fact:regrep}
The right regular representation $R$ of $P$ on $\mathbb C[P]$, $(R(g)v)(x)=v(xg)$, decomposes as
$\bigoplus_{\chi\in\Irr(P)}V_\chi^{\oplus\chi(1)}$ (Artin--Wedderburn; see
Serre~\cite{Serre1977}, Isaacs~\cite{Isaacs1994}).
\end{fact}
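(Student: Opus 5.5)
The statement is the decomposition of the right regular representation, so the plan is to decompose $\mathbb C[P]$ under the two-sided action of $P$ and then restrict to the right action.

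\emph{Step 1: an inner product and complete reducibility.} Equip $\mathbb C[P]$ with the inner product $\langle u,v\rangle=\sum_x u(x)\overline{v(x)}$. Each $R(g)$ only permutes coordinates, so it is unitary. Hence the orthogonal complement of an $R$-invariant subspace is again $R$-invariant, and by induction on dimension (Maschke) $\mathbb C[P]$ splits as a direct sum of irreducible subrepresentations.

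\emph{Step 2: the multiplicity of each irreducible.} For an irreducible $(\rho,V)$ with character $\chi$, I would compute the multiplicity of $V$ in $R$ as $\langle\chi_R,\chi\rangle$. For this I need the orthogonality relations for $\Irr(P)$, which I take as standard, citing Serre or Isaacs as the paper does. The character of $R$ is easy to read off: $R(g)$ is the permutation matrix of $x\mapsto xg$. For $g\ne1$ this permutation has no fixed points, so $\chi_R(g)=0$, while $\chi_R(1)=|P|$. Therefore
\[
\langle\chi_R,\chi\rangle=\frac1{|P|}\sum_g\chi_R(g)\overline{\chi(g)}=\frac{1}{|P|}\,|P|\,\chi(1)=\chi(1).
\]
So every irreducible character appears, and it appears with multiplicity $\chi(1)$.

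\emph{Step 3: the character space is determined.} By the orthogonality relations, a representation is determined up to equivalence by its character. Combining this with Step 1 gives
\[
\mathbb C[P]\cong\bigoplus_{\chi\in\Irr(P)}V_\chi^{\oplus\chi(1)}.
\]
As a consistency check, comparing dimensions yields $|P|=\sum_\chi\chi(1)^2$.

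\emph{Main obstacle and a refinement for later use.} The only nonformal input is the orthogonality relations. They follow from Schur's lemma applied to the averaged operator $\frac1{|P|}\sum_g\rho(g)A\phi(g^{-1})$, which I will cite rather than reprove.

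If the later diagonalization needs explicit invariant subspaces rather than an abstract isomorphism, I would refine the argument using matrix coefficients. Fix an orthonormal basis of $V_\chi$ with $\rho$ unitary, so $\rho(x)=(\rho_{ab}(x))$. For a fixed row index $a$, the span of the functions $x\mapsto\rho_{ab}(x)$, as $b$ varies, is $R$-invariant, because $\rho(xg)=\rho(x)\rho(g)$. The $\chi(1)$ choices of $a$ give $\chi(1)$ copies of $V_\chi$, one for each $a$. Schur orthogonality then shows these copies are mutually orthogonal and together span $\mathbb C[P]$.

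This explicit form is what will later let the distance matrix, which is an element of the group algebra acting by convolution, act on each copy through $\sum_j j\,\rho(T_j)$.
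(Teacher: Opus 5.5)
Your proof is correct. The paper gives no argument of its own. It attributes the fact to Artin--Wedderburn and cites Serre and Isaacs, so the implied route is the algebra isomorphism $\mathbb C[P]\cong\bigoplus_{\chi\in\Irr(P)}\operatorname{End}(V_\chi)\cong\bigoplus_\chi M_{\chi(1)}(\mathbb C)$. After that, the regular module of each block $M_d(\mathbb C)$ visibly splits into $d$ copies of the $d$-dimensional simple module.

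You instead give the character-theoretic proof, the one in Serre, \S2.4. It uses only Maschke (via unitarity of the permutation matrices $R(g)$), the orthogonality relations, and the one-line computation that $\chi_R$ vanishes off the identity, so $\langle\chi_R,\chi\rangle=\chi(1)$. Your route avoids the structure theory of semisimple algebras, but it yields the decomposition only up to isomorphism. The Wedderburn route gives the finer algebra-level statement, from which the key input to Lemma~\ref{lem:diagonalize-nonab} is immediate: a central element acts as a scalar on each block.

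For the paper's purposes your abstract decomposition already suffices. $R(z_f)=\sum_h f(h)R(h)$ preserves every $R$-invariant subspace, and it commutes with all $R(g)$ when $z_f$ is central. So Schur's lemma applies directly to each irreducible summand produced in Steps~1--3. The matrix-coefficient refinement, which is in effect a hands-on version of the Wedderburn decomposition, is therefore not needed.

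If you keep that refinement, be precise about what each ingredient gives. Schur orthogonality only shows that the $\chi(1)$ copies are mutually orthogonal and linearly independent. That they together span $\mathbb C[P]$ comes from the dimension count $\sum_\chi\chi(1)^2=|P|$ established in Step~3.
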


\begin{lemma}[Diagonalization by class functions]\label{lem:diagonalize-nonab}
For a class function $f:P\to\mathbb C$, $z_f:=\sum_gf(g)g$, $M:=R(z_f)$ (so
$M_{x,y}=f(x^{-1}y)$): the eigenvalues of $M$ are $\{\chi(f)/\chi(1):\chi\in\Irr(P)\}$
($\chi(f):=\sum_gf(g)\chi(g)$), with multiplicity $\chi(1)^2$.
\end{lemma}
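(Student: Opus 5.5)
We work inside the group algebra $\mathbb C[P]$. Since $f$ is a class function, $z_f=\sum_g f(g)g$ commutes with every $h\in P$: conjugating gives $h z_f h^{-1}=\sum_g f(g)\,hgh^{-1}=\sum_{g'}f(h^{-1}g'h)g'=z_f$. So $z_f$ lies in the centre $Z(\mathbb C[P])$.

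First, we identify the matrix. With $(R(g)v)(x)=v(xg)$ and $M=R(z_f)=\sum_g f(g)R(g)$, in the basis of indicator functions $e_y$ we have $(R(g)e_y)(x)=e_y(xg)$. Hence $R(g)e_y=e_{yg^{-1}}$, and the $(x,y)$ entry of $R(g)$ is $1$ iff $x=yg^{-1}$, i.e.\ iff $g=x^{-1}y$. Therefore $M_{x,y}=f(x^{-1}y)$, which justifies the parenthetical remark in the statement.

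Next, we decompose $\mathbb C[P]$ by Fact~\ref{fact:regrep} as $\bigoplus_{\chi\in\Irr(P)}V_\chi^{\oplus\chi(1)}$, with $\rho_\chi$ acting on $V_\chi$. The operator $M$ preserves each summand and acts on every copy of $V_\chi$ by $\rho_\chi(z_f)=\sum_g f(g)\rho_\chi(g)$. This operator commutes with all $\rho_\chi(h)$ because $z_f$ is central, so Schur's lemma gives $\rho_\chi(z_f)=c_\chi I_{\chi(1)}$. Taking traces gives $c_\chi\chi(1)=\sum_g f(g)\chi(g)=\chi(f)$, so $c_\chi=\chi(f)/\chi(1)$. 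The block $V_\chi^{\oplus\chi(1)}$ has dimension $\chi(1)^2$, which is the claimed multiplicity. Since $\sum_\chi\chi(1)^2=|P|$, these blocks account for the whole spectrum of $M$. If $\chi(f)/\chi(1)$ happens to coincide for distinct $\chi$, the multiplicities add, and the statement should be read as a multiset indexed by $\chi$.

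The main care point is the convention for the right regular representation. One has to check that $R$ is a homomorphism: $(R(g)R(h)v)(x)=(R(h)v)(xg)=v(xgh)=(R(gh)v)(x)$. The other point is that Schur's lemma really applies to $\rho_\chi(z_f)$. This needs only centrality of $z_f$, and that is exactly where the class-function hypothesis enters. Everything else is bookkeeping of dimensions.
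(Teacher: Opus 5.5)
Your proposal is correct and follows the same route as the paper: $z_f$ is central because $f$ is a class function, $\mathbb C[P]$ decomposes via Fact~\ref{fact:regrep}, Schur's lemma makes $\rho_\chi(z_f)$ a scalar, and taking traces gives the value $\chi(f)/\chi(1)$ on the $\chi(1)$ copies of $V_\chi$. Your explicit checks that $M_{x,y}=f(x^{-1}y)$ and that $R$ is a homomorphism are correct additions that the paper leaves implicit.
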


\begin{proof}
$z_f$ is central (its coefficients are constant on conjugacy classes). By
Fact~\ref{fact:regrep}, $z_f$ acts on each copy of $V_\chi$ inside $R$ as a scalar $\eta_\chi$
(Schur's Lemma), with $\chi(1)\eta_\chi=\operatorname{tr}\rho_\chi(z_f)=\chi(f)$, so
$\eta_\chi=\chi(f)/\chi(1)$, occurring on $\chi(1)$ copies of the $\chi(1)$-dimensional $V_\chi$.
\end{proof}

\begin{fact}[First Orthogonality Relation~\cite{Serre1977,Isaacs1994}]\label{fact:orthgen}
For $\chi,\chi'\in\Irr(P)$, $\langle\chi,\chi'\rangle:=\frac1{|P|}\sum_{g\in P}\chi(g)\overline{\chi'(g)}
=\delta_{\chi,\chi'}$.
\end{fact}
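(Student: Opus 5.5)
We prove the First Orthogonality Relation (Fact~\ref{fact:orthgen}) by the standard averaging argument with Schur's Lemma. Throughout we use unitary representations and matrix coefficients.

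\textbf{Step 1 (unitarity).} Let $\rho:P\to GL(V)$ and $\rho':P\to GL(V')$ be irreducible representations affording $\chi$ and $\chi'$. Averaging an arbitrary Hermitian inner product over $P$ gives a $P$-invariant inner product on each space, so we may choose orthonormal bases in which every $\rho(g)$ and $\rho'(g)$ is unitary. Then $\rho'(g^{-1})=\rho'(g)^{*}$, and hence $\overline{\chi'(g)}=\chi'(g^{-1})$.

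\textbf{Step 2 (averaging operator).} For any linear map $A:V'\to V$ define
\[
\tilde A:=\frac1{|P|}\sum_{g\in P}\rho(g)\,A\,\rho'(g^{-1}).
\]
Reindexing the sum shows $\rho(h)\tilde A=\tilde A\rho'(h)$ for all $h\in P$. Thus $\tilde A\in\operatorname{Hom}_P(\rho',\rho)$.

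\textbf{Step 3 (Schur's Lemma).} By Schur's Lemma, if $\rho\not\sim\rho'$ then $\tilde A=0$. If $\rho=\rho'$ then $\tilde A=\lambda I$ for a scalar $\lambda$. Taking traces and using cyclicity gives $\lambda\chi(1)=\operatorname{tr}A$, so $\lambda=\operatorname{tr}A/\chi(1)$.

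\textbf{Step 4 (matrix coefficients).} Take $A=E_{jk}$, a matrix unit, and read off the $(i,l)$ entry of $\tilde A$. This gives
\[
\frac1{|P|}\sum_g\rho(g)_{ij}\,\rho'(g^{-1})_{kl}=
\begin{cases}0 & \text{if }\rho\not\sim\rho',\\[2pt] \delta_{il}\delta_{jk}/\chi(1) & \text{if }\rho=\rho'.\end{cases}
\]

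\textbf{Step 5 (traces).} Set $i=j$ and $k=l$, then sum over $i$ and $k$. The left side becomes $\frac1{|P|}\sum_g\chi(g)\chi'(g^{-1})$, which equals $\langle\chi,\chi'\rangle$ by Step~1.
\begin{itemize}
\item If $\chi\neq\chi'$, then $\rho\not\sim\rho'$ (distinct irreducible characters in $\Irr(P)$ come from inequivalent representations), so the sum is $0$.
\item If $\chi=\chi'$, the sum is $\sum_{i,k}\delta_{ik}\delta_{ik}/\chi(1)=\chi(1)/\chi(1)=1$.
\end{itemize}
This proves $\langle\chi,\chi'\rangle=\delta_{\chi,\chi'}$.

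\textbf{Main obstacle.} The only delicate points are the two uses of Schur's Lemma in Step~3. Over $\mathbb C$, an endomorphism of an irreducible module has an eigenvalue $\lambda$; its eigenspace is $P$-invariant, so it is all of $V$ and the endomorphism is scalar. For inequivalent irreducibles, the kernel and image of an intertwiner are invariant subspaces, so by irreducibility a nonzero intertwiner would be an isomorphism, which is impossible; hence it is zero. Everything else is direct bookkeeping.
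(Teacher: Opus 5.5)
The paper gives no proof of this fact; it only cites Serre and Isaacs, and your argument (invariant inner product, the averaged intertwiner $\tilde A$, Schur's Lemma, matrix coefficients with $A=E_{jk}$, then taking traces) is correct and is exactly the standard proof in Serre, so it matches what the paper relies on. The one point you should state explicitly is that when $\chi=\chi'$ you may take $\rho'=\rho$, since $\langle\chi,\chi'\rangle$ depends only on the characters, and this is what justifies using the $\rho=\rho'$ case of Step~4.
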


\begin{lemma}[Vanishing of nontrivial characters]\label{lem:chiP}
Let $\chi_0\in\Irr(P)$ be the trivial character. Then $\chi_0(P)=|P|$; for $\chi\ne\chi_0$,
$\chi(P)=0$.
\end{lemma}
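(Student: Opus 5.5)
The plan is to read $\chi(P)$ as the notation $\chi(T)$ with $T=P$, that is $\chi(P)=\sum_{g\in P}\chi(g)$, and to get both claims from the First Orthogonality Relation (Fact~\ref{fact:orthgen}) by pairing $\chi$ against the trivial character $\chi_0$.

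First, the trivial character satisfies $\chi_0(g)=1$ for every $g\in P$. Hence $\chi_0(P)=\sum_{g\in P}1=|P|$ immediately.

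For $\chi\ne\chi_0$, apply Fact~\ref{fact:orthgen} to the pair $(\chi,\chi_0)$. Since $\overline{\chi_0(g)}=1$, this gives
\[
0=\langle\chi,\chi_0\rangle=\frac1{|P|}\sum_{g\in P}\chi(g)\overline{\chi_0(g)}=\frac1{|P|}\chi(P).
\]
Therefore $\chi(P)=0$.

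As a cross-check, we can also use Lemma~\ref{lem:diagonalize-nonab} with $f\equiv1$. Then $M=J$, the all-ones matrix, whose eigenvalues are $|P|$ (once) and $0$. Since $\chi_0$ has $\chi_0(1)^2=1$ and $\eta_{\chi_0}=|P|$, every other eigenvalue $\chi(P)/\chi(1)$ must be $0$.

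There is no real obstacle here. The only point needing care is that $\chi_0$ is real-valued, so the complex conjugation in the inner product is harmless.
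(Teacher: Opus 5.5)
Your proof is correct and is essentially the paper's argument: evaluate $\chi_0(P)=|P|$ directly, then use $\chi_0\equiv 1$ in the First Orthogonality Relation to get $0=\langle\chi,\chi_0\rangle=\frac{1}{|P|}\chi(P)$ for $\chi\ne\chi_0$. Your extra cross-check via Lemma~\ref{lem:diagonalize-nonab} with $f\equiv1$ is also valid and not circular, since $J$ has eigenvalue $|P|$ exactly once.
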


\begin{proof}
$\chi_0(P)=\sum_{g\in P}\chi_0(g)=\sum_{g\in P}1=|P|$. For $\chi\ne\chi_0$: by
Fact~\ref{fact:orthgen}, $\langle\chi,\chi_0\rangle=\frac1{|P|}\sum_g\chi(g)\overline{\chi_0(g)}
=\frac1{|P|}\sum_g\chi(g)=\frac1{|P|}\chi(P)$ (as $\chi_0\equiv1$) equals $0$, so $\chi(P)=0$.
\end{proof}

\begin{lemma}[Normality propagates to layers]\label{lem:normal-layers}
If $T\subseteq P\setminus\{1\}$ is symmetric, generating, normal, then every layer $T_j$ of
$\Cay(P,T)$ is normal.
\end{lemma}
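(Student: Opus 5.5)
The plan is to show that conjugation by any fixed $h\in P$ is a graph automorphism of $\Cay(P,T)$ that fixes the identity. Such a map preserves $d(1,\cdot)$, and therefore maps each layer $T_j$ into itself. Define $c_h:P\to P$ by $c_h(x)=hxh^{-1}$. It is a group automorphism, hence a bijection of the vertex set. For $x,y\in P$ we have
\[
c_h(x)^{-1}c_h(y)=hx^{-1}h^{-1}hyh^{-1}=h(x^{-1}y)h^{-1}.
\]
Since $T$ is normal, i.e.\ a union of conjugacy classes, $hTh^{-1}=T$. So $x^{-1}y\in T$ if and only if $c_h(x)^{-1}c_h(y)\in T$; the converse direction uses $c_{h^{-1}}$, or simply the fact that $hTh^{-1}=T$ is an equality. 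Thus $c_h$ preserves adjacency and non-adjacency, and is a graph automorphism.

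A graph automorphism is an isometry, as in the proof of Lemma~\ref{lem:transreg}, because it carries paths to paths of the same length in both directions. Hence $d(c_h(1),c_h(g))=d(1,g)$. Since $c_h(1)=1$, this gives $d(1,hgh^{-1})=d(1,g)$. Therefore $g\in T_j$ implies $hgh^{-1}\in T_j$ for every $h\in P$. So $T_j$ is closed under conjugation, i.e.\ a union of conjugacy classes, which is exactly normality.

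For completeness one can also note that each $T_j$ is symmetric, although the statement does not require this. The argument uses $d(1,g^{-1})=d(g,1)=d(1,g)$, applying left multiplication by $g$ together with Lemma~\ref{lem:transreg}'s isometry property. The connectedness hypothesis $\langle T\rangle=P$ is used only so that the layers partition $P\setminus\{1\}$ and all distances are finite.

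There is no real obstacle here. The only point needing care is to check that conjugation preserves non-edges as well as edges. This follows from the equality $hTh^{-1}=T$ rather than a mere inclusion, and that equality holds because $T$ is a union of full conjugacy classes.
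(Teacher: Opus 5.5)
Your proposal is correct and follows essentially the same route as the paper. Conjugation $c_h$ is a graph automorphism of $\Cay(P,T)$ because $c_h(x)^{-1}c_h(y)=h(x^{-1}y)h^{-1}$ and $hTh^{-1}=T$; it fixes $1$ and therefore preserves each distance layer $T_j$, and you simply spell out more carefully than the paper why non-edges are preserved and why automorphisms are isometries.
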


\begin{proof}
Conjugation $c_h$ is a graph automorphism when $T$ is normal ($c_h(x)^{-1}c_h(y)=h(x^{-1}y)h^{-1}$),
fixes $1$, hence preserves each $T_j$ setwise as a bijection of $P$ preserving the finite
partition $\{T_j\}\cup\{\{1\}\}$.
\end{proof}

By Lemma~\ref{lem:diagonalize-nonab} applied to $f=d(1,\cdot)$, the eigenvalues of $D_P$ are
$\partial_\chi=\frac1{\chi(1)}\sum_jj\,\chi(T_j)$.

Throughout the remainder of the paper, each $T_i$ in a split connection set
(Definition~\ref{def:splits}) is additionally assumed normal in $P_i$, so that
Lemma~\ref{lem:diagonalize-nonab} applies to $T_i$ and, by Lemma~\ref{lem:normal-layers}, to each
of its distance layers.

\section{The Full Distance Spectrum of Cayley Graphs on Nilpotent Groups}\label{sec:spec}

We now determine the complete spectrum, with multiplicity, of $D(\Gamma)$, $D^L(\Gamma)$ and
$D^Q(\Gamma)$ for $G$ nilpotent. By Lemma~\ref{lem:normal-layers}, every layer $T_i^{(k)}$ of
$\Gamma_i$ is normal in $P_i$; applying Lemma~\ref{lem:diagonalize-nonab} to
$f=f_i:=d_{\Gamma_i}(1,\cdot)=\sum_{k=1}^{\delta_i}
k\,\mathbf1_{T_i^{(k)}}$ (a class function, being a $\mathbb Z_{\ge0}$-combination of class
functions) gives: the eigenvalues of $D(\Gamma_i)$ are
\[
\partial_\chi^{(i)}:=\frac{\chi(f_i)}{\chi(1)}=\frac1{\chi(1)}\sum_{k=1}^{\delta_i}k\,\chi(T_i^{(k)}),
\qquad \chi\in\Irr(P_i),
\]
with multiplicity $\chi(1)^2$. Taking $\chi=\chi_0^{(i)}$ (trivial character of $P_i$) recovers
the transmission: $\partial_{\chi_0^{(i)}}^{(i)}=\sum_kk\,|T_i^{(k)}|=t_i$
(Lemma~\ref{lem:transreg} applied to $\Gamma_i$).

\subsection{Irreducible representations of a direct product}

\begin{lemma}[$\Irr(G)=\Irr(P_1)\otimes\cdots\otimes\Irr(P_r)$]\label{lem:irr-product}
Let $\rho_i$ afford $\chi_i\in\Irr(P_i)$ for $i=1,\dots,r$. The external tensor product
$\rho_1\otimes\cdots\otimes\rho_r$, acting on $V_{\chi_1}\otimes\cdots\otimes V_{\chi_r}$ by
$(g_1,\dots,g_r)\mapsto\rho_1(g_1)\otimes\cdots\otimes\rho_r(g_r)$, is an irreducible
representation of $G=P_1\times\cdots\times P_r$, with character
$\chi_{\vec\chi}(g_1,\dots,g_r)=\prod_i\chi_i(g_i)$ and degree $\prod_i\chi_i(1)$. As
$(\chi_1,\dots,\chi_r)$ ranges over $\Irr(P_1)\times\cdots\times\Irr(P_r)$, these exhaust
$\Irr(G)$, with no repetitions.
\end{lemma}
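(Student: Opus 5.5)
By induction on $r$, it suffices to treat two factors $G=H\times K$, since $P_1\times\cdots\times P_r=(P_1\times\cdots\times P_{r-1})\times P_r$ and the external tensor product is associative. So let $\rho$ afford $\chi\in\Irr(H)$ on $V$ and $\sigma$ afford $\psi\in\Irr(K)$ on $W$.

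\emph{Homomorphism and character.} First, $(h,k)\mapsto\rho(h)\otimes\sigma(k)$ is a homomorphism into $GL(V\otimes W)$. This holds because $(A\otimes B)(A'\otimes B')=AA'\otimes BB'$ and multiplication in $H\times K$ is componentwise. Its character is $\operatorname{tr}(A\otimes B)=\operatorname{tr}A\cdot\operatorname{tr}B$, which gives $\chi(h)\psi(k)$ and degree $\chi(1)\psi(1)$.

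\emph{Irreducibility.} Here I will use the character criterion $\langle\theta,\theta\rangle=1$ rather than invariant subspaces. By Maschke's theorem, every representation decomposes into irreducibles, and the first orthogonality relation (Fact~\ref{fact:orthgen}) then shows that $\langle\theta,\theta\rangle=\sum m_i^2$ for the multiplicities $m_i$. Hence $\langle\theta,\theta\rangle=1$ is equivalent to irreducibility. For the product character,
\[
\langle\chi\psi,\chi'\psi'\rangle_{H\times K}=\frac1{|H||K|}\sum_{h,k}\chi(h)\psi(k)\overline{\chi'(h)\psi'(k)}=\langle\chi,\chi'\rangle_H\langle\psi,\psi'\rangle_K=\delta_{\chi\chi'}\delta_{\psi\psi'},
\]
since the double sum factorizes. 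Taking $(\chi',\psi')=(\chi,\psi)$ shows each product representation is irreducible. Taking distinct pairs shows the characters are pairwise orthogonal, hence pairwise distinct, so there are no repetitions.

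\emph{Exhaustion.} This is the step that needs an extra ingredient, and I expect it to be the main one. I would count degrees. By Fact~\ref{fact:regrep}, comparing dimensions gives $|P|=\sum_{\chi\in\Irr(P)}\chi(1)^2$ for any finite group $P$. The products therefore satisfy
\[
\sum_{\chi,\psi}(\chi(1)\psi(1))^2=\Big(\sum_\chi\chi(1)^2\Big)\Big(\sum_\psi\psi(1)^2\Big)=|H||K|=|G|.
\]
These products are distinct irreducible characters of $G$. If some $\theta\in\Irr(G)$ were missing, then by Fact~\ref{fact:regrep} applied to $G$ we would get $\sum_{\Irr(G)}\theta(1)^2\ge|G|+\theta(1)^2>|G|$, a contradiction. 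Hence the products exhaust $\Irr(G)$. Alternatively, one could count classes, since conjugacy classes of $H\times K$ are products of classes, but the degree count is cleaner given Fact~\ref{fact:regrep}. Iterating over the $r$ factors then gives the formula $\chi_{\vec\chi}(g_1,\dots,g_r)=\prod_i\chi_i(g_i)$ together with the bijection $\Irr(P_1)\times\cdots\times\Irr(P_r)\to\Irr(G)$.
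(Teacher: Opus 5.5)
Your proof is correct and follows the paper's argument: trace multiplicativity gives the character, the factorized inner product gives irreducibility and no repetitions, and the sum-of-squared-degrees count gives exhaustion. The differences are cosmetic. You reduce to two factors by induction, whereas the paper factors the inner product over all $r$ coordinates at once. You also justify the norm-one criterion and $\sum_{\chi\in\Irr(P)}\chi(1)^2=|P|$ explicitly, where the paper cites them as standard.
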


\begin{proof}
That $\rho_1\otimes\cdots\otimes\rho_r$ is a representation of $G$ (a homomorphism into
$GL(V_{\chi_1}\otimes\cdots\otimes V_{\chi_r})$) is immediate, as each factor acts only on its
own tensor slot and the $P_i$ commute elementwise in $G$. Its character is the stated product,
since the trace of a tensor product of operators is the product of their traces.

\emph{Irreducibility and pairwise orthogonality.} For two tuples $\vec\chi=(\chi_1,\dots,\chi_r)$,
$\vec\chi\,'=(\chi_1',\dots,\chi_r')$,
\[
\langle\chi_{\vec\chi},\chi_{\vec\chi\,'}\rangle_G=\frac1{|G|}\sum_{(g_i)}\prod_i\chi_i(g_i)
\overline{\chi_i'(g_i)}=\prod_{i=1}^r\left(\frac1{n_i}\sum_{g_i}\chi_i(g_i)\overline{\chi_i'(g_i)}
\right)=\prod_{i=1}^r\langle\chi_i,\chi_i'\rangle_{P_i}=\prod_{i=1}^r\delta_{\chi_i,\chi_i'},
\]
using $|G|=\prod n_i$ and Fact~\ref{fact:orthgen} on each factor. This product of Kronecker
deltas is $1$ if $\vec\chi=\vec\chi\,'$ (termwise) and $0$ otherwise. Taking $\vec\chi\,'=\vec\chi$:
$\langle\chi_{\vec\chi},\chi_{\vec\chi}\rangle_G=1$, which (a character of norm $1$ is
irreducible) proves irreducibility; taking $\vec\chi\,'\ne\vec\chi$ gives orthogonality, in
particular distinctness (non-isomorphism) of the representations for distinct tuples.

\emph{Completeness.} Summing squared degrees over all tuples,
\[
\sum_{(\chi_1,\dots,\chi_r)}\Big(\prod_i\chi_i(1)\Big)^2=\prod_{i=1}^r\Big(\sum_{\chi_i\in\Irr(P_i)}
\chi_i(1)^2\Big)=\prod_{i=1}^rn_i=|G|,
\]
using $\sum_{\chi\in\Irr(P)}\chi(1)^2=|P|$ on each factor (standard: the sum of squared degrees
of all irreducibles of a finite group equals the group order). Since the representations
$\{\rho_{\vec\chi}\}$ are pairwise non-isomorphic and their squared degrees already sum to
$|G|$, no further irreducible representations of $G$ can exist; they exhaust $\Irr(G)$.
\end{proof}

\subsection{The full distance spectrum}

By Lemma~\ref{lem:cartesian} and Lemma~\ref{lem:distadd}, $D(\Gamma)_{(x_i),(y_i)}=f((x_i)^{-1}(y_i))$
where $f(g_1,\dots,g_r):=d_\Gamma(1,(g_i))=\sum_if_i(g_i)$, so, in the group algebra
$\mathbb C[G]=\mathbb C[P_1]\otimes\cdots\otimes\mathbb C[P_r]$,
\[
z_f=\sum_{(g_i)}f((g_i))\,(g_1,\dots,g_r)=\sum_{i=1}^r\Big(\sum_{(g_j)_j}f_i(g_i)\,(g_1,\dots,g_r)\Big)
=\sum_{i=1}^rz_{f_i}^{(i)}\otimes\bigotimes_{j\ne i}z_{\mathbf1}^{(j)},
\]
where $z_{f_i}^{(i)}:=\sum_{g_i\in P_i}f_i(g_i)\,g_i\in\mathbb C[P_i]$ and
$z_{\mathbf1}^{(j)}:=\sum_{g_j\in P_j}g_j\in\mathbb C[P_j]$ (corresponding to the constant
function $1$ on $P_j$); note $z_{\mathbf1}^{(j)}$ is \emph{not} the identity of $\mathbb C[P_j]$
but the sum of all group elements, so $D(\Gamma)=R(z_f)=\sum_iJ_1\otimes\cdots\otimes
D(\Gamma_i)\otimes\cdots\otimes J_r$, with $J_j:=R(z_{\mathbf1}^{(j)})$ the all-ones matrix on
$P_j$ (as $z_{\mathbf1}^{(j)}$'s convolution matrix has every entry $1$).

\begin{theorem}[Full distance spectrum]\label{thm:full-spectrum}
For each tuple $\vec\chi=(\chi_1,\dots,\chi_r)\in\Irr(P_1)\times\cdots\times\Irr(P_r)$, let
$Z(\vec\chi):=\{i:\chi_i\ne\chi_0^{(i)}\}$ (the trivial-character complement). Define
\[
\mathcal D(\vec\chi):=\begin{cases}
\displaystyle\sum_{i=1}^rt_i\prod_{j\ne i}n_j=:t & Z(\vec\chi)=\varnothing,\\[3mm]
\displaystyle\Big(\prod_{j\ne i_0}n_j\Big)\,\partial_{\chi_{i_0}}^{(i_0)} & Z(\vec\chi)=\{i_0\},\\[3mm]
0 & |Z(\vec\chi)|\ge2.
\end{cases}
\]
Then
\[
\operatorname{spec}\big(D(\Gamma)\big)=\Big\{\mathcal D(\vec\chi):\vec\chi\in\Irr(P_1)\times
\cdots\times\Irr(P_r)\Big\},
\]
with $\mathcal D(\vec\chi)$ occurring with multiplicity $\prod_i\chi_i(1)^2$ (multiplicities of
tuples giving the same numerical value are added).
\end{theorem}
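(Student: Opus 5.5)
The plan is to apply Lemma~\ref{lem:diagonalize-nonab} directly to $G$ with the class function $f=\sum_i f_i\circ\pi_i$, where $\pi_i$ is the coordinate projection, and then to evaluate $\chi_{\vec\chi}(f)$ using Lemma~\ref{lem:irr-product}.

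First I check that $f$ is a class function on $G$. Conjugation in $G=P_1\times\cdots\times P_r$ is componentwise, and each $f_i$ is a class function on $P_i$, since it is a nonnegative integer combination of indicators of the normal layers $T_i^{(k)}$ (Lemma~\ref{lem:normal-layers}). Hence $f$ is constant on conjugacy classes of $G$. By Lemmas~\ref{lem:cartesian} and~\ref{lem:distadd}, $D(\Gamma)_{x,y}=f(x^{-1}y)$, so $D(\Gamma)=R(z_f)$. Lemma~\ref{lem:diagonalize-nonab} then says that the eigenvalues are $\chi(f)/\chi(1)$ for $\chi\in\Irr(G)$, each with multiplicity $\chi(1)^2$. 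By Lemma~\ref{lem:irr-product}, the elements of $\Irr(G)$ are exactly the $\chi_{\vec\chi}$, each occurring once, with degree $\prod_i\chi_i(1)$. This already gives the multiplicity $\prod_i\chi_i(1)^2$, with coinciding values added.

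The core computation is
\[
\chi_{\vec\chi}(f)=\sum_{i=1}^r\sum_{(g_j)}f_i(g_i)\prod_j\chi_j(g_j)=\sum_{i=1}^r\chi_i(f_i)\prod_{j\ne i}\chi_j(P_j),
\]
which is obtained by factoring the sum over $(g_j)$ coordinatewise. By Lemma~\ref{lem:chiP}, $\chi_j(P_j)$ equals $n_j$ if $\chi_j$ is trivial and $0$ otherwise. So the $i$-th summand survives only if $Z(\vec\chi)\subseteq\{i\}$. I then split into the three cases.

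\emph{Case $|Z(\vec\chi)|\ge2$.} Every summand contains a vanishing factor, so the eigenvalue is $0$.

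\emph{Case $Z(\vec\chi)=\{i_0\}$.} Only the $i=i_0$ summand survives. Dividing by $\chi_{\vec\chi}(1)=\chi_{i_0}(1)$ gives $\prod_{j\ne i_0}n_j\cdot\chi_{i_0}(f_{i_0})/\chi_{i_0}(1)=\big(\prod_{j\ne i_0}n_j\big)\,\partial^{(i_0)}_{\chi_{i_0}}$.

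\emph{Case $Z(\vec\chi)=\varnothing$.} All summands survive, and $\chi_0^{(i)}(f_i)=t_i$, so the eigenvalue is $\sum_i t_i\prod_{j\ne i}n_j$.

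I do not expect a serious obstacle. The only delicate points are the interchange of sums (factoring the product over coordinates) and confirming that $f$ is genuinely a class function on $G$, so that Lemma~\ref{lem:diagonalize-nonab} applies to $G$ itself rather than only to each factor. As an alternative, one could instead diagonalize the Kronecker expression $\sum_i J_1\otimes\cdots\otimes D(\Gamma_i)\otimes\cdots\otimes J_r$ using simultaneous eigenbases. However, the character computation above avoids having to verify commutation and yields the multiplicities for free.
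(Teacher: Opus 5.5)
Your proposal is correct and follows essentially the same route as the paper. Both arguments apply Lemma~\ref{lem:diagonalize-nonab} to $G$ with $f=\sum_i f_i\circ\pi_i$, index $\Irr(G)$ by Lemma~\ref{lem:irr-product}, and use Lemma~\ref{lem:chiP} to kill every summand unless $Z(\vec\chi)\subseteq\{i\}$. The only difference is cosmetic: you factor the trace $\chi_{\vec\chi}(f)$ coordinatewise, whereas the paper factors the operator $\rho_{\vec\chi}(z_f)$ through the tensor decomposition of $z_f$. Your explicit check that $f$ is a class function on $G$ also supplies a step the paper leaves implicit.
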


\begin{proof}
By Lemma~\ref{lem:irr-product}, the representation $\rho_{\vec\chi}=\rho_{\chi_1}\otimes\cdots
\otimes\rho_{\chi_r}$ ranges over all of $\Irr(G)$ as $\vec\chi$ ranges over
$\Irr(P_1)\times\cdots\times\Irr(P_r)$, with degree $\prod_i\chi_i(1)$. By
Lemma~\ref{lem:diagonalize-nonab} (applied to $G$ and $f$), $D(\Gamma)=R(z_f)$ acts on (each
copy of) $\rho_{\vec\chi}$ as the scalar $\chi_{\vec\chi}(f)/\chi_{\vec\chi}(1)$, with
multiplicity $\chi_{\vec\chi}(1)^2=\big(\prod_i\chi_i(1)\big)^2=\prod_i\chi_i(1)^2$. It remains to
compute this scalar and show it equals $\mathcal D(\vec\chi)$ as defined.

Equivalently (and more directly), we compute $\rho_{\vec\chi}(z_f)$ using the decomposition
$z_f=\sum_iz_{f_i}^{(i)}\otimes\bigotimes_{j\ne i}z_{\mathbf1}^{(j)}$ derived above. Since
$\rho_{\vec\chi}=\rho_{\chi_1}\otimes\cdots\otimes\rho_{\chi_r}$ acts on each tensor factor
$\mathbb C[P_j]$ through $\rho_{\chi_j}$,
\[
\rho_{\vec\chi}(z_f)=\sum_{i=1}^r\rho_{\chi_i}(z_{f_i}^{(i)})\otimes\bigotimes_{j\ne i}
\rho_{\chi_j}(z_{\mathbf1}^{(j)}).
\]
By Lemma~\ref{lem:diagonalize-nonab} (applied within $P_i$ to $f=f_i$),
$\rho_{\chi_i}(z_{f_i}^{(i)})=\partial_{\chi_i}^{(i)}\,I$. By Lemma~\ref{lem:diagonalize-nonab}
applied within $P_j$ to $f=\mathbf1_{P_j}$ (the constant function $1$), together with
Lemma~\ref{lem:chiP}, $\rho_{\chi_j}(z_{\mathbf1}^{(j)})=\frac{\chi_j(P_j)}{\chi_j(1)}I=n_jI$ if
$\chi_j=\chi_0^{(j)}$, and $=0\cdot I$ if $\chi_j\ne\chi_0^{(j)}$. So the $i$-th summand is the
scalar
\[
\partial_{\chi_i}^{(i)}\cdot\prod_{j\ne i}\big(n_j\,[\chi_j=\chi_0^{(j)}]\big)
=\partial_{\chi_i}^{(i)}\Big(\prod_{j\ne i}n_j\Big)\cdot\Big[\forall j\ne i:\chi_j=\chi_0^{(j)}\Big],
\]
which is zero unless every $\chi_j$ with $j\ne i$ is trivial. Summing over $i=1,\dots,r$:

\emph{Case 1. $Z(\vec\chi)=\varnothing$} (every $\chi_i$ trivial): every summand's indicator holds
(vacuously true condition), giving total $\sum_i\partial_{\chi_0^{(i)}}^{(i)}\prod_{j\ne i}n_j
=\sum_it_i\prod_{j\ne i}n_j=t$.

\emph{Case 2. $Z(\vec\chi)=\{i_0\}$} (exactly one nontrivial, at index $i_0$): for $i=i_0$, the
condition "$\forall j\ne i_0:\chi_j=\chi_0^{(j)}$" holds (as every $j\ne i_0$ is not in
$Z(\vec\chi)$), giving summand $\partial_{\chi_{i_0}}^{(i_0)}\prod_{j\ne i_0}n_j$; for any $i\ne
i_0$, the condition requires $\chi_{i_0}=\chi_0^{(i_0)}$ (as $i_0\ne i$, so $i_0$ is among the
"$j\ne i$" indices to check), which fails since $i_0\in Z(\vec\chi)$; so that summand is $0$.
Total: $\big(\prod_{j\ne i_0}n_j\big)\partial_{\chi_{i_0}}^{(i_0)}$.

\emph{Case 3. $|Z(\vec\chi)|\ge2$}: fix any $i\in\{1,\dots,r\}$; since $|Z(\vec\chi)|\ge2$, there is
some $i_1\in Z(\vec\chi)$ with $i_1\ne i$ (at most one element of $Z(\vec\chi)$ can equal $i$).
Then $\chi_{i_1}\ne\chi_0^{(i_1)}$ with $i_1\ne i$, so the condition "$\forall j\ne i:
\chi_j=\chi_0^{(j)}$" fails (violated at $j=i_1$), making the $i$-th summand $0$. As $i$ was
arbitrary, every summand vanishes: total $0$.

In every case, $\rho_{\vec\chi}(z_f)=\mathcal D(\vec\chi)\,I$, confirming the eigenvalue on
$\rho_{\vec\chi}$ (multiplicity $\prod_i\chi_i(1)^2$) is exactly $\mathcal D(\vec\chi)$, proving
the theorem.
\end{proof}

\subsection{Distance Laplacian and distance signless Laplacian spectra}

\begin{corollary}[Full $D^L$ and $D^Q$ spectra]\label{cor:DL-DQ}
$\Gamma$ is transmission-regular with transmission $t=\sum_it_i\prod_{j\ne i}n_j$ (the
$Z(\vec\chi)=\varnothing$ value of Theorem~\ref{thm:full-spectrum}), so $D^L(\Gamma)=tI-D(\Gamma)$
and $D^Q(\Gamma)=tI+D(\Gamma)$, giving
\[
\operatorname{spec}\big(D^L(\Gamma)\big)=\Big\{t-\mathcal D(\vec\chi):\vec\chi\Big\},\qquad
\operatorname{spec}\big(D^Q(\Gamma)\big)=\Big\{t+\mathcal D(\vec\chi):\vec\chi\Big\},
\]
with the same multiplicities $\prod_i\chi_i(1)^2$ as in Theorem~\ref{thm:full-spectrum}.
Explicitly: the tuple $\vec\chi=(\chi_0^{(1)},\dots,\chi_0^{(r)})$ contributes the eigenvalue $0$
to $D^L(\Gamma)$ (resp.\ $2t$ to $D^Q(\Gamma)$), with multiplicity $1$; every tuple with
$|Z(\vec\chi)|\ge2$ contributes $t$ to $D^L(\Gamma)$ (resp.\ $t$ to $D^Q(\Gamma)$); and every
tuple with $Z(\vec\chi)=\{i_0\}$ contributes $t-\big(\prod_{j\ne i_0}n_j\big)\partial_{\chi_{i_0}}^{(i_0)}$
to $D^L(\Gamma)$ (resp.\ $t+\big(\prod_{j\ne i_0}n_j\big)\partial_{\chi_{i_0}}^{(i_0)}$ to $D^Q(\Gamma)$).
\end{corollary}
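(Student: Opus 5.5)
The plan is to first establish transmission regularity of $\Gamma$ with the stated value of $t$. Then $D^L$ and $D^Q$ become polynomials in $D(\Gamma)$, and the corollary reduces to Theorem~\ref{thm:full-spectrum}.

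\emph{Step 1 (transmission).} By Lemma~\ref{lem:transreg} applied to $\Gamma$ itself, every vertex has the same transmission $Tr(1)=\sum_{g\in G}d_\Gamma(1,g)$. Using Lemma~\ref{lem:distadd}, this sum becomes $\sum_{(g_i)}\sum_i f_i(g_i)$. The $i$-th term, summed over all tuples, equals $\big(\sum_{g_i\in P_i}f_i(g_i)\big)\prod_{j\ne i}n_j=t_i\prod_{j\ne i}n_j$, where $t_i$ is the transmission of $\Gamma_i$ (Lemma~\ref{lem:transreg}). Hence $Tr(\Gamma)=tI$ with $t=\sum_it_i\prod_{j\ne i}n_j$. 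This agrees with the value $\mathcal D(\vec\chi)$ for $Z(\vec\chi)=\varnothing$. As a consistency check, that eigenvalue is the one on the constant vector, i.e.\ the row sum of $D(\Gamma)$.

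\emph{Step 2 (spectral transfer).} Since $Tr(\Gamma)=tI$ commutes with everything, $D^L=tI-D$ and $D^Q=tI+D$ are diagonalized by any eigenbasis of $D(\Gamma)$. Concretely, on each isotypic block $\rho_{\vec\chi}$ the matrix $D(\Gamma)$ acts as the scalar $\mathcal D(\vec\chi)$, as shown in the proof of Theorem~\ref{thm:full-spectrum}. So $D^L$ and $D^Q$ act there as the scalars $t\mp\mathcal D(\vec\chi)$, with the same multiplicity $\prod_i\chi_i(1)^2$. Equal numerical values from different tuples have their multiplicities added, exactly as in the theorem.

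\emph{Step 3 (explicit cases).} Substitute the three cases of $\mathcal D(\vec\chi)$:
\begin{itemize}
\item the all-trivial tuple gives $t-t=0$ for $D^L$ and $2t$ for $D^Q$, with multiplicity $\prod_i 1^2=1$;
\item $|Z(\vec\chi)|\ge2$ gives $\mathcal D=0$, hence $t$ for both matrices;
\item $Z(\vec\chi)=\{i_0\}$ gives $t\mp\big(\prod_{j\ne i_0}n_j\big)\partial^{(i_0)}_{\chi_{i_0}}$.
\end{itemize}

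There is no serious obstacle here. The only point needing care is Step 1, namely checking that the transmission of the product graph is the claimed weighted sum rather than $\sum_i t_i$. This is exactly where the factor $\prod_{j\ne i}n_j$ enters, from summing over the free coordinates.
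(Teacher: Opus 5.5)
Your proposal is correct and follows the paper's route: transmission regularity gives $D^L=tI-D$ and $D^Q=tI+D$, after which the spectra are read off termwise from Theorem~\ref{thm:full-spectrum} and the three cases of $\mathcal D(\vec\chi)$ are substituted. The only difference is in obtaining the value of $t$: you compute $Tr(1)$ directly by summing distance additivity over the free coordinates, which is slightly more self-contained, whereas the paper identifies $Tr(1)$ with the $D(\Gamma)$-eigenvalue on the all-ones vector, i.e.\ the all-trivial-tuple component. Either argument works.
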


\begin{proof}
Transmission regularity of $\Gamma$ (Lemma~\ref{lem:transreg} applied to $G$) identifies $t$ with
$Tr(1)$, which is exactly the $D(\Gamma)$-eigenvalue on the all-trivial tuple (as computed in
Theorem~\ref{thm:full-spectrum}'s first case) since that eigenvalue's eigenvector is the all-ones
vector of $G$ (the tensor product of the all-ones vectors of each $P_i$, which spans the
$1$-dimensional trivial-tuple isotypic component). Given $D^L=tI-D$, $D^Q=tI+D$, their
eigenvalues are $t\mp\mathcal D(\vec\chi)$ respectively, applied termwise to
Theorem~\ref{thm:full-spectrum}'s formula; the three displayed special cases substitute the three
cases of $\mathcal D(\vec\chi)$ directly.
\end{proof}

\subsection{Worked example}

\begin{example}\label{ex:main}
Let $G=Q_8\times\mathbb Z_5$ ($P_1=Q_8$, $n_1=8$; $P_2=\mathbb Z_5$, $n_2=5$; $n=40$), with
$T_1=Q_8\setminus\{1\}$ (so $\Gamma_1=K_8$, $\delta_1=1$) and $T_2=\{1,4\}$ (so
$\Gamma_2=C_5$, $\delta_2=2$).

\emph{Factor $1$ ($K_8$):} $\Irr(Q_8)$ has four linear characters $\chi_0,\chi_a,\chi_b,\chi_{ab}$
(degree $1$) and one degree-$2$ character $\theta$. Since $\Gamma_1=K_8$ has $\delta_1=1$
(single layer $T_1^{(1)}=T_1=Q_8\setminus\{1\}$), $\partial_\chi^{(1)}=\chi(T_1)/\chi(1)$: for the
trivial character, $\chi_0(T_1)=7$, so $\partial_{\chi_0}^{(1)}=7=t_1$; for any nontrivial linear
character $\chi\in\{\chi_a,\chi_b,\chi_{ab}\}$, $\chi(T_1)=\chi(Q_8)-\chi(1)=0-1=-1$
(Lemma~\ref{lem:chiP}), so $\partial_\chi^{(1)}=-1$; for $\theta$, $\theta(T_1)=\theta(Q_8)-\theta(1)
=0-2=-2$, so $\partial_\theta^{(1)}=-2/2=-1$. (All nontrivial eigenvalues of $D(K_8)=J-I$ equal
$-1$, as expected: $K_8$'s distance matrix is $J-I$, with eigenvalues $7$ (once) and $-1$
(seven-fold), matching $1+3+3=7$ dimensions from $\chi_a,\chi_b,\chi_{ab},\theta$ (with
multiplicity $2^2=4$) summing to $1+1+1+4=7$.)

\emph{Factor $2$ ($C_5$):} $\Irr(\mathbb Z_5)=\{\chi_0,\chi_1,\chi_2,\chi_3,\chi_4\}$,
$\chi_k(x)=\zeta_5^{kx}$. With $\delta_2=2$, $T_2^{(1)}=\{1,4\}$, $T_2^{(2)}=\{2,3\}$:
$\partial_{\chi_0}^{(2)}=1\cdot2+2\cdot2=6=t_2$; for $\chi_k$, $k\ne0$,
$\partial_{\chi_k}^{(2)}=(\zeta_5^k+\zeta_5^{4k})+2(\zeta_5^{2k}+\zeta_5^{3k})=2\cos(2\pi k/5)+
4\cos(4\pi k/5)$, giving (using $\cos(2\pi/5)=\frac{\sqrt5-1}4$, $\cos(4\pi/5)=-\frac{\sqrt5+1}4$)
$\partial_{\chi_1}^{(2)}=\partial_{\chi_4}^{(2)}=\frac{\sqrt5-1}2-(\sqrt5+1)=\frac{-\sqrt5-3}2$, and
$\partial_{\chi_2}^{(2)}=\partial_{\chi_3}^{(2)}=-\frac{\sqrt5+1}2+(\sqrt5-1)=\frac{\sqrt5-3}2$.

\emph{Assembling $\operatorname{spec}(D(\Gamma))$ via Theorem~\ref{thm:full-spectrum}}
($n_1=8,n_2=5$, $n=40$; $\Irr(Q_8)=\{\chi_0,\chi_a,\chi_b,\chi_{ab},\theta\}$ of degrees
$1,1,1,1,2$, $\Irr(\mathbb Z_5)=\{\chi_0,\chi_1,\chi_2,\chi_3,\chi_4\}$, all degree $1$):
\begin{itemize}
\item $Z(\vec\chi)=\varnothing$ (the tuple $(\chi_0,\chi_0)$): eigenvalue
$t=t_1n_2+t_2n_1=7\cdot5+6\cdot8=83$, multiplicity $1^2\cdot1^2=1$.
\item $Z(\vec\chi)=\{1\}$ ($P_2$-component trivial): the four tuples
$(\chi_a,\chi_0),(\chi_b,\chi_0),(\chi_{ab},\chi_0),(\theta,\chi_0)$ all give the \emph{same}
eigenvalue $n_2\cdot\partial_\chi^{(1)}=5\cdot(-1)=-5$ (as $\partial_\chi^{(1)}=-1$ for every
nontrivial $\chi\in\Irr(Q_8)$, computed above); their multiplicities
$1^2,1^2,1^2,2^2$ sum to $1+1+1+4=7$.
\item $Z(\vec\chi)=\{2\}$ ($P_1$-component trivial): the tuples $(\chi_0,\chi_1)$ and
$(\chi_0,\chi_4)$ give eigenvalue $n_1\partial_{\chi_1}^{(2)}=8\cdot\frac{-\sqrt5-3}2=-4\sqrt5-12$,
total multiplicity $1+1=2$; the tuples $(\chi_0,\chi_2)$ and $(\chi_0,\chi_3)$ give eigenvalue
$n_1\partial_{\chi_2}^{(2)}=8\cdot\frac{\sqrt5-3}2=4\sqrt5-12$, total multiplicity $1+1=2$.
\item $|Z(\vec\chi)|\ge2$ (both components nontrivial): eigenvalue $0$, with multiplicity
$\big(\sum_{\chi\ne\chi_0}\chi(1)^2\big)\cdot\big(\#\{\chi_k:k\ne0\}\big)=(1+1+1+4)\cdot4=28$
(every nontrivial $\Irr(Q_8)$-degree-squared, times the $4$ nontrivial choices in $\Irr(\mathbb
Z_5)$).
\end{itemize}
As a check, the multiplicities sum to $1+7+2+2+28=40=n$, as required.

By Corollary~\ref{cor:DL-DQ} (with $t=83$), the distance Laplacian spectrum is
\[
\{0^{(1)}\}\cup\{88^{(7)}\}\cup\{95+4\sqrt5^{\,(2)}\}\cup\{95-4\sqrt5^{\,(2)}\}\cup\{83^{(28)}\}
\]
(superscripts denoting multiplicities), and the distance signless Laplacian spectrum is
\[
\{166^{(1)}\}\cup\{78^{(7)}\}\cup\{71-4\sqrt5^{\,(2)}\}\cup\{71+4\sqrt5^{\,(2)}\}\cup\{83^{(28)}\}.
\]
Neither is integral (the $\pm4\sqrt5$ terms survive), consistent with $\Gamma_2=C_5$ not being an
integral Cayley graph, since $T_2=\{1,4\}$ is not $\Phi$-invariant for $\mathbb Z_5$ (its
multiplier orbit under $(\mathbb Z/5\mathbb Z)^\times$ is all of $\{1,2,3,4\}$, not $\{1,4\}$).
\end{example}

\section{Eigenvalue-Free Intervals}\label{sec:gap}

We now use the explicit spectrum of Section~\ref{sec:spec}, together with Perron--Frobenius
theory, to determine exactly the interval adjacent to the extremal eigenvalue of $D^L$
and $D^Q$ that contains no other eigenvalue.

\subsection{Perron--Frobenius simplicity theorem}

\begin{lemma}[Simplicity of the extremal eigenvalue]\label{lem:perron}
For $P$ finite, $T\subseteq P\setminus\{1\}$ symmetric, normal, generating, $\Delta=\Cay(P,T)$:
the transmission $t_\Delta$ is the \emph{largest} eigenvalue of $D(\Delta)$, and occurs with
multiplicity exactly $1$.
\end{lemma}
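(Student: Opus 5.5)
Two facts must be established: $t_\Delta$ dominates every eigenvalue of $D(\Delta)$, and it occurs exactly once. The plan is to combine the Perron--Frobenius theorem with the explicit eigenvalue formula $\partial_\chi=\frac1{\chi(1)}\sum_j j\,\chi(T_j)$ from Lemma~\ref{lem:diagonalize-nonab}.

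First, $t_\Delta$ is an eigenvalue with the all-ones eigenvector $\mathbf 1$. By Lemma~\ref{lem:transreg}, every row sum of $D(\Delta)$ equals $t_\Delta$, so $D(\Delta)\mathbf 1=t_\Delta\mathbf 1$. Equivalently, this is the eigenvalue $\partial_{\chi_0}=t_\Delta$ attached to the trivial character.

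Second, $t_\Delta$ is the largest eigenvalue. $D(\Delta)$ is real symmetric and entrywise nonnegative, so its spectral radius is bounded by its maximum row sum, which is $t_\Delta$. Hence $|\lambda|\le t_\Delta$ for every eigenvalue $\lambda$. The same bound can also be read off the characters: $|\chi(g)|\le\chi(1)$ gives $|\partial_\chi|\le\sum_j j|T_j|=t_\Delta$.

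Third, $t_\Delta$ is simple; this is the main obstacle. I would argue via irreducibility. Since $\Delta$ is connected, every off-diagonal entry $d(x,y)\ge1$ when $|P|\ge2$, so $D(\Delta)$ is a positive matrix off the diagonal and in particular irreducible. Perron--Frobenius for irreducible nonnegative matrices then says the spectral radius $t_\Delta$ is an algebraically simple eigenvalue.

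As a cross-check in the paper's own language, I would verify that equality $\partial_\chi=t_\Delta$ forces $\chi=\chi_0$. Equality requires $\mathrm{Re}\,\chi(g)=\chi(1)$ for all $g\in T_j$ and all $j$, i.e.\ for all $g\ne1$. Since $|\chi(g)|\le\chi(1)$, this forces $\chi(g)=\chi(1)$, so $\rho_\chi(g)=I$ for every $g\in P$. Then $\rho_\chi$ is trivial, and irreducibility gives $\chi(1)=1$, so $\chi=\chi_0$. The multiplicity is therefore $\chi_0(1)^2=1$.

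The trivial case $|P|=1$, where $D=(0)$ and $t_\Delta=0$, holds vacuously.
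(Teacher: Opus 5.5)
Your proposal is correct and follows essentially the same route as the paper: $\mathbf{1}$ is a positive eigenvector for $t_\Delta$, $D(\Delta)$ is an irreducible nonnegative matrix because all off-diagonal distances are at least $1$, and Perron--Frobenius then makes $t_\Delta$ the simple spectral radius. Your row-sum bound and your character-theoretic check that $\partial_\chi=t_\Delta$ forces $\chi=\chi_0$ are both valid extras; the latter gives an independent proof of simplicity that uses normality, which the Perron--Frobenius argument does not need.
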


\begin{proof}
$D(\Delta)$ is a real symmetric matrix with zero diagonal and strictly positive off-diagonal
entries (distinct vertices are at distance $\ge1$), hence an irreducible nonnegative matrix. The
all-ones vector $\mathbf1$ satisfies $D(\Delta)\mathbf1=t_\Delta\mathbf1$ (each row sums to the
transmission) and is strictly positive. By the Perron--Frobenius theorem, an irreducible
nonnegative matrix has a unique (up to scaling) positive eigenvector, and its eigenvalue is both
the spectral radius and simple; since $\mathbf1$ is such a positive eigenvector, $t_\Delta$ is
exactly this eigenvalue: the largest eigenvalue of $D(\Delta)$, occurring with multiplicity $1$.
\end{proof}

\begin{corollary}\label{cor:simple}
$t=\sum_it_i\prod_{j\ne i}n_j$ is the largest eigenvalue of $D(\Gamma)$, with multiplicity $1$;
equivalently, $0$ is the smallest eigenvalue of $D^L(\Gamma)$, with multiplicity $1$, and $2t$ is
the largest eigenvalue of $D^Q(\Gamma)$, with multiplicity $1$.
\end{corollary}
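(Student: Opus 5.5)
The plan is to apply Lemma~\ref{lem:perron} directly to $\Gamma=\Cay(G,S)$ itself, and then translate the conclusion through Corollary~\ref{cor:DL-DQ}. The hypotheses of Lemma~\ref{lem:perron} ask for $S$ symmetric, generating and normal in $G$; we note, however, that its proof uses only that $D(\Gamma)$ has strictly positive off-diagonal entries and that $\mathbf 1$ is an eigenvector. Both hold for any connected transmission-regular graph, so normality of $S$ is not genuinely needed. Even so, we check it. Each $T_i$ is normal in $P_i$ by the standing assumption, and conjugation in $G=P_1\times\cdots\times P_r$ acts componentwise. Hence conjugating $\iota_i(s)$ by $(h_j)_j$ gives $\iota_i(h_ish_i^{-1})\in\iota_i(T_i)$, and $S=\bigsqcup_i\iota_i(T_i)$ is a union of conjugacy classes of $G$. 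Generation follows from the remark after Definition~\ref{def:splits}.

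The first step is to show that $t$ is the spectral radius of $D(\Gamma)$ and is simple. Since $\Gamma$ is connected, $D(\Gamma)$ is symmetric and nonnegative with all off-diagonal entries $\ge1$, so it is irreducible. By Lemma~\ref{lem:transreg} applied to $G$, we have $D(\Gamma)\mathbf 1=Tr(1)\mathbf 1$. The proof of Corollary~\ref{cor:DL-DQ} identifies $Tr(1)$ with $t=\sum_it_i\prod_{j\ne i}n_j$. Alternatively, this follows directly from Lemma~\ref{lem:distadd}: summing $\sum_i d_{\Gamma_i}(1,g_i)$ over $G$, each $t_i$ is counted $\prod_{j\ne i}n_j$ times. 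Perron--Frobenius, exactly as in Lemma~\ref{lem:perron}, then gives that $t$ is the largest eigenvalue of $D(\Gamma)$ and that it has multiplicity $1$.

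As a consistency check against Theorem~\ref{thm:full-spectrum}, $t$ arises only from the all-trivial tuple, whose multiplicity is $1$. This is compatible with the result, though it does not prove it: a priori some $\mathcal D(\vec\chi)$ with $Z(\vec\chi)=\{i_0\}$ could coincide numerically with $t$. Perron--Frobenius rules this out. Indeed, $\big(\prod_{j\ne i_0}n_j\big)|\partial^{(i_0)}_{\chi}|\le\big(\prod_{j\ne i_0}n_j\big)t_{i_0}<t$ by Lemma~\ref{lem:perron} on $\Gamma_{i_0}$, with simplicity giving strictness. So we could also argue purely spectrally, factor by factor.

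The final step is the translation to $D^L$ and $D^Q$. Since $D^L=tI-D$ and $D^Q=tI+D$ (Corollary~\ref{cor:DL-DQ}), the map $\lambda\mapsto t-\lambda$ reverses order and preserves multiplicities. Hence $0=t-t$ is the smallest eigenvalue of $D^L$, and it is simple. Likewise $\lambda\mapsto t+\lambda$ preserves order, so $2t$ is the largest eigenvalue of $D^Q$, again simple. The only point needing care is the normality/irreducibility justification above; everything else is direct substitution.
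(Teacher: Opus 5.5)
Your proposal is correct and is essentially the paper's proof: you apply the Perron--Frobenius argument of Lemma~\ref{lem:perron} to $\Gamma$ itself, with $\mathbf 1$ as the positive eigenvector for $Tr(1)=t$, and then shift by $tI$ to get the $D^L$ and $D^Q$ statements; your explicit check that $S$ is normal in $G$ and your direct computation of $Tr(1)$ from Lemma~\ref{lem:distadd} are harmless details the paper leaves implicit. The only blemish is in your non-essential side remark: for $r=1$ the chain $|\partial^{(i_0)}_\chi|\le t_{i_0}<t$ is false since $t_1=t$, and simplicity only yields the signed inequality $\partial^{(1)}_\chi<t_1$ (for $\Gamma_1=K_2$ one has $\partial^{(1)}_\chi=-t_1$), which is nonetheless all that remark needs.
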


\begin{proof}
Immediate from Lemma~\ref{lem:perron} applied to $\Delta=\Gamma$ (a Cayley graph on $G$, with
$S$ generating as $\bigsqcup_i\iota_i(T_i)$ is; note this is independent of, and consistent with,
the $Z(\vec\chi)=\varnothing$ case of Theorem~\ref{thm:full-spectrum}, which identifies $t$ as
exactly the eigenvalue attached to the all-trivial tuple — the unique tuple whose eigenvector is
$\mathbf1$). Since $D^L=tI-D$, the eigenvalue $t-t=0$ (attached to the top eigenvalue of $D$)
becomes the \emph{smallest} eigenvalue of $D^L$ (as $D^L$'s eigenvalues are $t$ minus those
of $D$, order reversed), with the same multiplicity $1$; similarly $D^Q=tI+D$ has largest
eigenvalue $t+t=2t$, multiplicity $1$.
\end{proof}

\subsection{The exact gap}

For each $i$, since $\Irr(P_i)\setminus\{\chi_0^{(i)}\}$ is a nonempty finite set (as $n_i\ge2$),
we define
\[
M_i^+:=\max_{\chi\in\Irr(P_i)\setminus\{\chi_0^{(i)}\}}\partial_\chi^{(i)}.
\]

\begin{theorem}[$D^L$ eigenvalue-free interval]\label{thm:exact-gap}
Let
\[
\Lambda:=\min\Big(\big\{t : r\ge2\big\}\cup\Big\{t-\big(\textstyle\prod_{j\ne i}n_j\big)M_i^+ :
i=1,\dots,r\Big\}\Big).
\]
Then $\Lambda$ is exactly the smallest positive eigenvalue of $D^L(\Gamma)$: $(0,\Lambda)$ is a
$D^L$-eigenvalue-free interval, and this is sharp (there is a genuine $D^L(\Gamma)$-eigenvalue
equal to $\Lambda$).
\end{theorem}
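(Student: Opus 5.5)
The plan is to read off the complete $D^L$-spectrum from Corollary~\ref{cor:DL-DQ} and identify its smallest positive element. By that corollary, the eigenvalues of $D^L(\Gamma)$ are $t-\mathcal D(\vec\chi)$. We split the tuples $\vec\chi$ into the three classes of Theorem~\ref{thm:full-spectrum}.

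First, the all-trivial tuple gives the eigenvalue $0$. By Corollary~\ref{cor:simple} it has multiplicity exactly $1$, and no other tuple gives $0$. Hence every other eigenvalue is strictly positive: $D^L$ is positive semidefinite, being $tI-D$ with $t$ the spectral radius of $D$.

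The tuples with $|Z(\vec\chi)|\ge2$ give the eigenvalue $t$. Such tuples exist if and only if $r\ge2$, since each $n_i\ge2$ means every $\Irr(P_i)$ contains a nontrivial character. This accounts for the set $\{t:r\ge2\}$ in the definition of $\Lambda$.

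The tuples with $Z(\vec\chi)=\{i\}$ give $t-\bigl(\prod_{j\ne i}n_j\bigr)\partial^{(i)}_{\chi_i}$, where $\chi_i$ ranges over $\Irr(P_i)\setminus\{\chi_0^{(i)}\}$. Since $\prod_{j\ne i}n_j>0$, for fixed $i$ the minimum over $\chi_i$ is attained at a maximiser of $\partial^{(i)}_{\chi_i}$. That minimum is therefore $t-\bigl(\prod_{j\ne i}n_j\bigr)M_i^+$, and it is a genuine eigenvalue, because $M_i^+$ is attained on the finite nonempty set $\Irr(P_i)\setminus\{\chi_0^{(i)}\}$.

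Taking the minimum over all three classes of the eigenvalues other than the simple $0$ gives exactly $\Lambda$. So $\Lambda$ is the minimum of the spectrum with the value $0$ (multiplicity one) removed.

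It remains to check that $\Lambda>0$, so that $\Lambda$ really is the smallest \emph{positive} eigenvalue and $(0,\Lambda)$ is a nonempty eigenvalue-free interval. This is the one step that is not pure bookkeeping. The candidate $t$ is positive trivially. For the single-index candidates there are two routes.

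\emph{Spectral route.} Each value $t-\bigl(\prod_{j\ne i}n_j\bigr)M_i^+$ is an eigenvalue of $D^L$ attached to a tuple other than the all-trivial one. Since the eigenvalue $0$ has multiplicity $1$ (Corollary~\ref{cor:simple}) and all eigenvalues of $D^L$ are $\ge0$, this value is strictly positive.

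\emph{Direct route (cross-check).} Lemma~\ref{lem:perron} applied to $\Gamma_i$ gives $\partial^{(i)}_\chi<t_i$ for every nontrivial $\chi$, with strict inequality because $t_i$ is a simple eigenvalue whose eigenspace is the trivial isotypic component. Hence $\bigl(\prod_{j\ne i}n_j\bigr)M_i^+<t_i\prod_{j\ne i}n_j\le t$.

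Sharpness is then immediate, since $\Lambda$ is attained by an explicit tuple: either one with $|Z|\ge2$, or one with $Z=\{i\}$ and $\chi_i$ a maximiser of $\partial^{(i)}$. The interval $(0,\Lambda)$ contains no eigenvalue by minimality.

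The main point to handle carefully is the multiplicity-one/strictness argument. It guarantees that no nontrivial tuple accidentally produces $0$ and thereby collapses the interval. I would cite Corollary~\ref{cor:simple} for this rather than recompute.
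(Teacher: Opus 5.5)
Your proposal is correct and follows the paper's proof. Like the paper, you read off the $D^L$-spectrum from Theorem~\ref{thm:full-spectrum}, take the value $t$ from tuples with $|Z(\vec\chi)|\ge2$ (present iff $r\ge2$), minimize $t-\bigl(\prod_{j\ne i}n_j\bigr)\partial^{(i)}_\chi$ over nontrivial $\chi$ at $M_i^+$ for each $i$, and take the overall minimum; your explicit check that no nontrivial tuple yields $0$ (via Corollary~\ref{cor:simple}, or directly via Lemma~\ref{lem:perron} on each factor) is a welcome addition, since the paper's proof calls these values ``the nonzero eigenvalues'' without justifying strict positivity.
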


\begin{proof}
By Theorem~\ref{thm:full-spectrum} and $D^L=tI-D$, the nonzero eigenvalues of $D^L(\Gamma)$ are
exactly $\{t\}$ (if $r\ge2$, from the $|Z(\vec\chi)|\ge2$ case — nonempty precisely when $r\ge2$,
since at least two indices are needed) together with $\big\{t-(\prod_{j\ne i}n_j)\partial_\chi^{(i)}:
i=1,\dots,r,\ \chi\in\Irr(P_i)\setminus\{\chi_0^{(i)}\}\big\}$ (from the $Z(\vec\chi)=\{i\}$
cases). For fixed $i$, since $\prod_{j\ne i}n_j>0$, the quantity $t-(\prod_{j\ne i}n_j)\partial_\chi^{(i)}$
is minimized over $\chi\ne\chi_0^{(i)}$ exactly when $\partial_\chi^{(i)}$ is maximized, i.e.\ at
$\partial_\chi^{(i)}=M_i^+$ (attained, as the max of a finite nonempty set), giving minimum value
$t-(\prod_{j\ne i}n_j)M_i^+$. Taking the overall minimum over $i$ and, if applicable, including
$t$, gives exactly $\Lambda$ as the minimum of the whole nonzero part of the spectrum — i.e.\ the
smallest positive eigenvalue — and this value is attained (by construction, as a minimum over a
finite set of values each individually attained).
\end{proof}

\begin{theorem}[$D^Q$ eigenvalue-free interval]\label{thm:exact-gap-Q}
Let
\[
\Lambda':=\max\Big(\big\{t : r\ge2\big\}\cup\Big\{t+\big(\textstyle\prod_{j\ne i}n_j\big)M_i^+ :
i=1,\dots,r\Big\}\Big).
\]
Then $\Lambda'$ is exactly the largest eigenvalue of $D^Q(\Gamma)$ strictly below $2t$:
$(\Lambda',2t)$ is a $D^Q$-eigenvalue-free interval, sharp in the same sense.
\end{theorem}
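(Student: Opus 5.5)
The plan is to mirror the proof of Theorem~\ref{thm:exact-gap}, using Corollary~\ref{cor:DL-DQ} to list the spectrum of $D^Q(\Gamma)=tI+D(\Gamma)$ explicitly. The all-trivial tuple contributes $2t$, with multiplicity exactly $1$ by Corollary~\ref{cor:simple}. Every tuple with $|Z(\vec\chi)|\ge2$ contributes $t$; such tuples exist precisely when $r\ge2$, since two distinct indices are needed and each $P_i$ has a nontrivial irreducible character because $n_i\ge2$. Every tuple with $Z(\vec\chi)=\{i\}$ contributes $t+(\prod_{j\ne i}n_j)\partial_\chi^{(i)}$ for $\chi\in\Irr(P_i)\setminus\{\chi_0^{(i)}\}$.

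First I will check that every eigenvalue other than the one coming from the all-trivial tuple lies strictly below $2t$. This follows from Corollary~\ref{cor:simple}: $2t$ is the largest eigenvalue of $D^Q(\Gamma)$ and has multiplicity $1$. Its unique eigenline is the span of $\mathbf 1$, which is the isotypic component of the all-trivial tuple. Hence no other tuple can produce the value $2t$, so each remaining value is strictly less than $2t$.

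Next, for fixed $i$, the coefficient $\prod_{j\ne i}n_j$ is positive. So $t+(\prod_{j\ne i}n_j)\partial_\chi^{(i)}$ is maximized over nontrivial $\chi$ exactly when $\partial_\chi^{(i)}$ is maximal, that is, at $M_i^+$. This maximum is attained because it is taken over a finite nonempty set.

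Finally, the set of eigenvalues of $D^Q(\Gamma)$ other than $2t$ is the union of $\{t\}$ (present if $r\ge2$) and these per-$i$ families. Its maximum is therefore $\Lambda'$, and this maximum is attained by an actual eigenvalue. Combined with the strict inequality from the previous step, this shows that $(\Lambda',2t)$ contains no eigenvalue and that $\Lambda'$ is itself an eigenvalue, which gives sharpness.

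The only delicate point is the strict inequality $\Lambda'<2t$. Without it, one of the single-nontrivial-factor values could coincide with $2t$ and the interval would be degenerate. Perron--Frobenius simplicity, as packaged in Corollary~\ref{cor:simple}, handles this directly, so I do not need any separate bound on $\partial_\chi^{(i)}$. As a consistency check one can also verify that $|\chi(f_i)|\le\chi(1)\,t_i$, with equality excluded for nontrivial $\chi$, but relying on simplicity is cleaner.
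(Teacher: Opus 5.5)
Your proof is correct and follows the paper's argument: you read off the sub-Perron spectrum of $D^Q(\Gamma)$ from Corollary~\ref{cor:DL-DQ}, maximize each single-nontrivial-factor family at $M_i^+$, and take the overall maximum, and you justify $\Lambda'<2t$ explicitly via Corollary~\ref{cor:simple}, which the paper leaves implicit in the phrase ``sub-Perron''. One small correction to your closing aside, which your proof does not rely on: the two-sided bound $|\chi(f_i)|\le\chi(1)\,t_i$ can be an equality for a nontrivial $\chi$ (for $P_i=\mathbb{Z}_2$ with $\Gamma_i=K_2$ one gets $\partial_\chi^{(i)}=-1=-t_i$), so only the one-sided strict inequality $\partial_\chi^{(i)}<t_i$ holds in general.
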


\begin{proof}
Identical to Theorem~\ref{thm:exact-gap}, using $D^Q=tI+D$: the sub-Perron eigenvalues are
$\{t\}$ (if $r\ge2$) and $\{t+(\prod_{j\ne i}n_j)\partial_\chi^{(i)}\}$, each of the latter
maximized (for fixed $i$) at $\partial_\chi^{(i)}=M_i^+$, giving the stated maximum $\Lambda'$ as
the largest sub-Perron eigenvalue.
\end{proof}

\subsection{An elementary universal bound}

\begin{lemma}
\label{lem:triangle}
$M_i^+\le t_i$ for every $i$.
\end{lemma}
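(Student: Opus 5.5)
The plan is to prove the bound for each fixed nontrivial $\chi\in\Irr(P_i)$, namely $\partial_\chi^{(i)}\le t_i$; the inequality for $M_i^+$ then follows by taking the maximum over the finite nonempty set $\Irr(P_i)\setminus\{\chi_0^{(i)}\}$. I would offer two independent routes and choose whichever reads more cleanly.

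\emph{First route: Perron--Frobenius.} Apply Lemma~\ref{lem:perron} to $\Delta=\Gamma_i$. It says $t_i$ is the largest eigenvalue of $D(\Gamma_i)$. By Lemma~\ref{lem:diagonalize-nonab}, applied with $f=f_i$ (a class function, by Lemma~\ref{lem:normal-layers} and the normality assumption on $T_i$), every $\partial_\chi^{(i)}$ is an eigenvalue of $D(\Gamma_i)$. It is real, because $D(\Gamma_i)$ is real symmetric. Hence $\partial_\chi^{(i)}\le t_i$. The simplicity part of Lemma~\ref{lem:perron} gives more: for $\chi\ne\chi_0^{(i)}$, the eigenvalue $\partial_\chi^{(i)}$ comes from a summand of $\mathbb C[P_i]$ orthogonal to the trivial one, so it cannot equal the simple top eigenvalue. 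This yields the strict inequality $M_i^+<t_i$, which the written statement does not require.

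\emph{Second route: elementwise, via the triangle inequality on characters.} Write $\rho_\chi$ for a representation affording $\chi$, chosen unitary. Each $\rho_\chi(g)$ then has eigenvalues on the unit circle, so $|\chi(g)|\le\chi(1)$. This gives
\[
\partial_\chi^{(i)}=\frac1{\chi(1)}\sum_{g}f_i(g)\chi(g)\le\frac1{\chi(1)}\sum_g f_i(g)|\chi(g)|\le\sum_g f_i(g)=t_i .
\]
The first inequality uses that $\partial_\chi^{(i)}$ is real and $f_i\ge0$, so $\partial_\chi^{(i)}=\operatorname{Re}\partial_\chi^{(i)}\le|\partial_\chi^{(i)}|$. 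The final equality is Lemma~\ref{lem:transreg}. The lemma's label "triangle" suggests this is the intended route.

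The only delicate point is justifying that $\partial_\chi^{(i)}$ is real. For this I would cite symmetry of $D(\Gamma_i)$ together with Lemma~\ref{lem:diagonalize-nonab}. Alternatively, $f_i(g)=f_i(g^{-1})$ because $T_i$ is symmetric, and $\chi(g^{-1})=\overline{\chi(g)}$, so the defining sum equals its own conjugate. Everything else is routine.
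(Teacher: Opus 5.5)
Your proposal is correct, and your second route is essentially the paper's proof. The paper bounds each layer sum, $|\chi(T_i^{(k)})|\le|T_i^{(k)}|\chi(1)$, to get the two-sided estimate $|\partial_\chi^{(i)}|\le t_i$. That is the same triangle-inequality argument you run element by element against $f_i$. Your explicit check that $\partial_\chi^{(i)}$ is real, via $f_i(g)=f_i(g^{-1})$ and $\chi(g^{-1})=\overline{\chi(g)}$, fills in a point the paper passes over when it goes from $|\partial_\chi^{(i)}|\le t_i$ to $\partial_\chi^{(i)}\le t_i$.

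Your first route is genuinely different, and it is also valid. It never estimates a character value; it reads the bound off Lemma~\ref{lem:perron} applied to $\Gamma_i$. Its simplicity clause then gives the strict inequality $M_i^+<t_i$, because a nonzero vector in the $\chi$-isotypic component is independent of $\mathbf 1$. Fed into the proof of Corollary~\ref{cor:nonneg}, this yields $\Lambda>0$ rather than only $\Lambda\ge0$.

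The cost is that Lemma~\ref{lem:triangle}, and with it Corollary~\ref{cor:nonneg}, would then rest on Perron--Frobenius. The paper deliberately presents that corollary as a direct character-theoretic consequence. Nothing is lost logically, since Corollary~\ref{cor:simple} already uses Perron--Frobenius to show $0$ is the simple smallest $D^L$-eigenvalue. However, under your first route Corollary~\ref{cor:nonneg} becomes redundant with Corollary~\ref{cor:simple}.
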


\begin{proof}
For any $\chi\in\Irr(P_i)$ and $g\in P_i$, $|\chi(g)|\le\chi(1)$ (the character value is a sum of
$\chi(1)$ roots of unity, each of modulus $1$, so the triangle inequality gives
$|\chi(g)|\le\chi(1)$). Hence, for each layer, $|\chi(T_i^{(k)})|=\big|\sum_{g\in
T_i^{(k)}}\chi(g)\big|\le\sum_{g\in T_i^{(k)}}|\chi(g)|\le|T_i^{(k)}|\chi(1)$, so
\[
|\partial_\chi^{(i)}|=\frac1{\chi(1)}\Big|\sum_{k=1}^{\delta_i}k\,\chi(T_i^{(k)})\Big|
\le\frac1{\chi(1)}\sum_{k=1}^{\delta_i}k\,|T_i^{(k)}|\chi(1)=\sum_{k=1}^{\delta_i}k\,|T_i^{(k)}|=t_i.
\]
In particular $\partial_\chi^{(i)}\le t_i$ for every $\chi$, so $M_i^+\le t_i$.
\end{proof}

\begin{corollary}
\label{cor:nonneg}
$\Lambda\ge0$ and $\Lambda'\le2t$; equivalently, $D^L(\Gamma)$ is positive semidefinite and every
eigenvalue of $D^Q(\Gamma)$ lies in $[0,2t]$ — proved here directly from
Lemma~\ref{lem:triangle}, without invoking positive-semidefiniteness of $D^L$ as an external fact.
\end{corollary}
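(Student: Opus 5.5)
The plan is to derive both inequalities directly from Lemma~\ref{lem:triangle}, which gives $|\partial_\chi^{(i)}|\le t_i$ for every $\chi\in\Irr(P_i)$, combined with the explicit list of eigenvalues in Corollary~\ref{cor:DL-DQ}. The whole argument reduces to one elementary estimate on the products $\big(\prod_{j\ne i}n_j\big)\partial_\chi^{(i)}$.

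The key step is the following bound. For each $i$ we have $t_i\prod_{j\ne i}n_j$ as one of the nonnegative summands of $t=\sum_k t_k\prod_{j\ne k}n_j$. Each $t_k\ge0$, and in fact $t_k>0$ since $n_k\ge2$. Hence
\[
\Big(\prod_{j\ne i}n_j\Big)|\partial_\chi^{(i)}|\le\Big(\prod_{j\ne i}n_j\Big)t_i\le t .
\]
In particular $t-\big(\prod_{j\ne i}n_j\big)M_i^+\ge0$ for every $i$. The other candidate value $t$ in the definition of $\Lambda$ is positive. So $\Lambda$ is a minimum of nonnegative numbers, and $\Lambda\ge0$.

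For $\Lambda'$, the same inequality gives $t+\big(\prod_{j\ne i}n_j\big)M_i^+\le 2t$. Also $t\le2t$. Hence $\Lambda'\le2t$.

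For the equivalent spectral formulation, I go through every eigenvalue listed in Corollary~\ref{cor:DL-DQ}:
\begin{itemize}
\item the all-trivial tuple gives $0$ for $D^L$ and $2t$ for $D^Q$;
\item tuples with $|Z(\vec\chi)|\ge2$ give $t\in[0,2t]$;
\item tuples with $Z(\vec\chi)=\{i_0\}$ give $t\mp\big(\prod_{j\ne i_0}n_j\big)\partial^{(i_0)}_{\chi_{i_0}}$, and the two-sided bound $|\partial|\le t_{i_0}$ places both values in $[0,2t]$.
\end{itemize}
Thus every $D^L$ eigenvalue is $\ge0$. Since $D^L$ is real symmetric, this is exactly positive semidefiniteness, and every $D^Q$ eigenvalue lies in $[0,2t]$.

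The only mild obstacle is the lower bound for $D^Q$. It needs the absolute-value form of Lemma~\ref{lem:triangle}, not merely $M_i^+\le t_i$. The eigenvalues $\partial_\chi^{(i)}$ are real, because $D(\Gamma_i)$ is symmetric, so the bound $-t_i\le\partial_\chi^{(i)}$ is meaningful and yields $t-\big(\prod_{j\ne i}n_j\big)t_i\ge0$, as required.
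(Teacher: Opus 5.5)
Your proof is correct and takes essentially the paper's route: you bound $M_i^+$ by $t_i$ via Lemma~\ref{lem:triangle} and use that $\big(\prod_{j\ne i}n_j\big)t_i$ is one nonnegative summand of $t$, which gives $t-\big(\prod_{j\ne i}n_j\big)M_i^+\ge0$ and $t+\big(\prod_{j\ne i}n_j\big)M_i^+\le2t$. Your eigenvalue-by-eigenvalue check is slightly more complete than the paper's proof: there, $\Lambda'\le2t$ controls only the top of the $D^Q(\Gamma)$ spectrum, and the lower bound $0$ for all of $\operatorname{spec}(D^Q(\Gamma))$ is left implicit, whereas you correctly obtain it from the two-sided estimate $|\partial_\chi^{(i)}|\le t_i$ established in the lemma's proof.
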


\begin{proof}
By Lemma~\ref{lem:triangle}, for every $i$,
\[
t-\Big(\prod_{j\ne i}n_j\Big)M_i^+\ \ge\ t-\Big(\prod_{j\ne i}n_j\Big)t_i
=\sum_{k\ne i}t_k\prod_{l\ne k}n_l\ \ge\ 0,
\]
the last step as a sum of nonnegative terms ($t_k\ge0$ always, being a sum of nonnegative
distances). Also $t\ge0$ trivially. Hence $\Lambda\ge0$ (Theorem~\ref{thm:exact-gap}, a minimum of
nonnegative quantities). The bound $\Lambda'\le2t$ follows symmetrically: $t+(\prod_{j\ne
i}n_j)M_i^+\le t+(\prod_{j\ne i}n_j)t_i=2t-\sum_{k\ne i}t_k\prod_{l\ne k}n_l\le2t$, and $t\le2t$
trivially (as $t\ge0$).
\end{proof}

\begin{remark}
\label{rem:not-sharp}
Corollary~\ref{cor:nonneg} only recovers $\Lambda\ge0$: it does \emph{not}, on its own, establish
a strictly positive universal gap (of a fixed size such as $n$) below which $D^L(\Gamma)$
provably has no nonzero eigenvalues, for arbitrary $\Gamma$. A sharp universal bound of that
general form is known in the literature for the \emph{second-largest} distance Laplacian
eigenvalue of a general graph (ordering eigenvalues $\partial_1^L\ge\partial_2^L\ge\cdots\ge
\partial_n^L=0$, one has $\partial_2^L(G)\ge n$, with equality iff $G\cong K_n$ or $K_n-e$; see
the surveys~\cite{RatherAouchiche2025,RatherGanieWang2026}); but that quantity bounds
$\partial_1^L,\partial_2^L$ near the \emph{top} of the spectrum, and is logically distinct from
the smallest nonzero eigenvalue $\Lambda=\partial_{n-1}^L$ near the \emph{bottom}, which is a
separately studied invariant and does not, to our knowledge, admit an analogous universal bound
in the literature. Theorems~\ref{thm:exact-gap} and~\ref{thm:exact-gap-Q} give the exact value of
$\Lambda,\Lambda'$ instead, for the nilpotent-group Cayley graph family, which is the strongest
correct statement available from the tools developed here.
\end{remark}

\subsection{A sharp bound in the distance-integral case}

\begin{corollary}
\label{cor:integral-gap}
Suppose every $\Gamma_i$ is distance-integral (i.e.\ $\partial_\chi^{(i)}\in\mathbb Z$ for every
$\chi\in\Irr(P_i)$ and every $i$). Then $\Lambda\ge1$ and $\Lambda'\le2t-1$: that is, $(0,1)$ is
$D^L$-eigenvalue-free and $(2t-1,2t)$ is $D^Q$-eigenvalue-free — in fact $D^L(\Gamma)$ and
$D^Q(\Gamma)$ are then themselves fully integral matrices, so every nonzero $D^L$-eigenvalue is a
positive integer and every sub-Perron $D^Q$-eigenvalue is an integer at most $2t-1$.
\end{corollary}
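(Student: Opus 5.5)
The argument combines Corollary~\ref{cor:DL-DQ} with integrality of the factor data. The corollary already says $D^L(\Gamma)=tI-D(\Gamma)$ and $D^Q(\Gamma)=tI+D(\Gamma)$. Since $D(\Gamma)$ has integer entries (graph distances) and $t=\sum_i t_i\prod_{j\ne i}n_j$ is an integer, both matrices are integral as matrices. Integrality of their eigenvalues is a separate matter, and that is where the hypothesis enters.

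\textbf{Step 1 (integral spectrum).} By Corollary~\ref{cor:DL-DQ}, every eigenvalue of $D^L(\Gamma)$ has one of three forms:
\begin{itemize}
\item $0$;
\item $t$, when $|Z(\vec\chi)|\ge2$;
\item $t-\big(\prod_{j\ne i_0}n_j\big)\partial^{(i_0)}_{\chi_{i_0}}$, when $Z(\vec\chi)=\{i_0\}$.
\end{itemize}
The eigenvalues of $D^Q(\Gamma)$ are the same expressions with the sign of $\partial$ flipped. The $t_i$ and $n_j$ are integers, and by hypothesis each $\partial_\chi^{(i)}\in\mathbb Z$. Hence every $D^L$- and $D^Q$-eigenvalue is an integer. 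This also follows from $\operatorname{spec}(D(\Gamma))$ being integral by Theorem~\ref{thm:full-spectrum}.

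\textbf{Step 2 (bound for $D^L$).} By Theorem~\ref{thm:exact-gap}, $\Lambda$ is the smallest positive eigenvalue of $D^L(\Gamma)$. By Corollary~\ref{cor:nonneg}, all $D^L$-eigenvalues are nonnegative. By Corollary~\ref{cor:simple}, the eigenvalue $0$ is simple, so the remaining $n-1$ eigenvalues are nonzero.
\begin{itemize}
\item One must still confirm that the nonzero part of the spectrum is nonempty, i.e.\ that $\Lambda$ really is an eigenvalue. This holds whenever $n\ge2$, which is guaranteed by $n_i\ge2$.
\item Each of these eigenvalues is a nonnegative, nonzero integer, hence a positive integer, hence $\ge1$.
\end{itemize}
Therefore $\Lambda\ge1$, and $(0,1)$ contains no $D^L$-eigenvalue.

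\textbf{Step 3 (bound for $D^Q$).} The sub-Perron eigenvalues of $D^Q(\Gamma)$ are the integers $t+\mathcal D(\vec\chi)$ with $\vec\chi$ not the all-trivial tuple.
\begin{itemize}
\item By Corollary~\ref{cor:nonneg}, these are at most $2t$.
\item By Corollary~\ref{cor:simple}, $2t$ is attained only by the all-trivial tuple, so none of them equals $2t$.
\end{itemize}
Being integers strictly below $2t$, they are at most $2t-1$. Therefore $\Lambda'\le 2t-1$, and $(2t-1,2t)$ contains no $D^Q$-eigenvalue.

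The main point needing care is the step from "nonzero" to "$\ge1$": it requires the simplicity result of Corollary~\ref{cor:simple}, so that the value $0$ (respectively $2t$) is not repeated by another tuple. Without it, $\Lambda$ could equal $0$ if some factor achieved $\partial=t_i$ together with $r=1$. Perron--Frobenius rules this out, so the argument closes.
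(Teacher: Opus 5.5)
Your proposal is correct and takes essentially the same route as the paper. Every eigenvalue is an integer by the explicit formula of Theorem~\ref{thm:full-spectrum}. The Perron--Frobenius simplicity of Corollary~\ref{cor:simple} then forces every other eigenvalue of $D^L(\Gamma)$ to be a positive integer, and every sub-Perron eigenvalue of $D^Q(\Gamma)$ to be an integer at most $2t-1$. You also separate entrywise integrality (automatic, since distances are integers) from integrality of the spectrum (where the hypothesis is used); this is cleaner than the paper's wording, which conflates the two.
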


\begin{proof}
If every $\partial_\chi^{(i)}\in\mathbb Z$, then every term in the spectrum formula for
$D(\Gamma)$ ($t$, $0$, and $(\prod_{j\ne i}n_j)\partial_\chi^{(i)}$, all built from integers) is an
integer, so $D(\Gamma)$, hence $D^L(\Gamma)=tI-D(\Gamma)$ and $D^Q(\Gamma)=tI+D(\Gamma)$
($t\in\mathbb Z$ automatically), are integral matrices. By Corollary~\ref{cor:simple}, $0$ is the
strictly smallest $D^L(\Gamma)$-eigenvalue with multiplicity $1$; the next-smallest, $\Lambda$
(Theorem~\ref{thm:exact-gap}), is therefore a \emph{positive} integer, i.e.\ $\Lambda\ge1$. The
$D^Q$ statement is symmetric, using $2t$ as the strictly largest, simple eigenvalue
(Corollary~\ref{cor:simple}) and $\Lambda'$ as the next-largest.
\end{proof}

\subsection{Worked example, continued}

\begin{example}\label{ex:gap}
Continuing Example~\ref{ex:main} ($G=Q_8\times\mathbb Z_5$, $n_1=8,n_2=5,n=40$, $t_1=7,t_2=6,t=83$):
there $\partial_\chi^{(1)}=-1$ for every nontrivial $\chi\in\Irr(Q_8)$, so $M_1^+=-1$, and
$\partial_{\chi_k}^{(2)}\in\big\{\frac{-\sqrt5-3}2,\frac{\sqrt5-3}2\big\}$ for $k=1,2,3,4$, so
$M_2^+=\frac{\sqrt5-3}2$ (the larger of the two, at $k=2,3$).

By Theorem~\ref{thm:exact-gap} ($r=2$, so $t=83$ is a candidate):
\[
t-\Big(\prod_{j\ne1}n_j\Big)M_1^+=83-5\cdot(-1)=88,\qquad
t-\Big(\prod_{j\ne2}n_j\Big)M_2^+=83-8\cdot\tfrac{\sqrt5-3}2=83-4\sqrt5+12=95-4\sqrt5\approx86.06.
\]
So $\Lambda=\min(83,\,88,\,95-4\sqrt5)=83$ (the $|Z(\vec\chi)|\ge2$ case itself, which is here the
smallest of the three candidates) — matching the distance Laplacian spectrum computed in
Example~\ref{ex:main}, $\{0,88^{(7)},95\pm4\sqrt5\,{}^{(2)},83^{(28)}\}$, whose smallest nonzero
value is indeed $83$. Since $\Gamma_2=C_5$ is not distance-integral (its nontrivial eigenvalues
involve $\sqrt5$), Corollary~\ref{cor:integral-gap} does not apply here — consistent with
$\Lambda=83$ not being forced to any particular integer threshold beyond what is computed exactly.
\end{example}

\section*{Acknowledgments}

The research of Pallabi Manna is supported by Anusandhan National Research Foundation
(ANRF)-India under the National Postdoctoral Fellowship scheme (File No.\ PDF/2025/006751). The
second author acknowledges the excellent research facilities provided by SRM University-AP. The
first author acknowledges the excellent research facilities provided by the Department of
Mathematics, IIT Hyderabad, and the mentorship of Dr.\ Rajesh Kannan for his constant support and
motivation.

\section*{Statements and Declarations}

\noindent\textbf{Competing Interests.} The authors have no competing interests to declare that are
relevant to the content of this article.

\section*{Data Availability}

Data sharing is not applicable to this article, as no datasets were used or generated during this
study.

\end{document}